\newtheorem{theorem}{Theorem}[section]
\newtheorem{corollary}[theorem]{Corollary}
\newtheorem{definition}[theorem]{Definition}
\newtheorem{proposition}[theorem]{Proposition}
\newtheorem{question}[theorem]{Question}
\newtheorem{lemma}[theorem]{Lemma}
\newtheorem{fact}[theorem]{Fact}
\newtheorem{problem}[theorem]{Problem}
\newtheorem{example}[theorem]{Example}
\numberwithin{equation}{section}
\numberwithin{theorem}{section}
\newcommand{\TT}{{\mathbb T}}
\newcommand{\ZZ}{{\mathbb Z}}
\newcommand{\NN}{{\mathbb N}}
\newcommand{\uuu}{{\mathbf u}}
\newcommand{\vvv}{{\mathbf v}}
\newcommand{\Tub}{{\tau}_{b\mathbf{u}}}
\newcommand{\Tu}{{\tau}_\mathbf{u}}
\newcommand{\Tv}{{\tau}_\mathbf{v}}
\title[On $T$-characterized subgroups of compact Abelian groups]{On $T$-characterized subgroups of compact Abelian groups}
\author{ S. Gabriyelyan}
\address{S. Gabriyelyan: Department of Mathematics, Ben-Gurion University of the Negev, Beer-Sheva, P.O. 653, Israel}
\email{saak@math.bgu.ac.il}
\begin{document}

\begin{abstract}
We say that a subgroup $H$ of an infinite compact Abelian group $X$  is {\it $T$-characterized} if there is a $T$-sequence $\mathbf{u} =\{ u_n \}$ in the dual group of $X$ such that $H=\{ x\in X: \; (u_n, x)\to 1 \}$.   We show that a closed subgroup $H$ of $X$ is $T$-characterized if and only if $H$ is a $G_\delta$-subgroup  of $X$ and the annihilator of $H$ admits a Hausdorff minimally almost periodic group topology. All  closed subgroups of an infinite compact Abelian group $X$ are  $T$-characterized if and only if $X$ is metrizable and connected.
We prove that every compact Abelian group $X$ of infinite exponent has a $T$-characterized subgroup which is not an $F_{\sigma}$-subgroup of $X$ that gives a negative answer to Problem 3.3 in \cite{DG}.

\keywords{Characterized subgroup\and $T$-characterized subgroup\and  $T$-sequence\and  dual group\and  von Neumann radical}
\end{abstract}

\maketitle

\section{Introduction}

{\bf Notation and Preliminaries.} Let $X$ be an Abelian topological group.  We   denoted by $\widehat{X}$ the group of all continuous characters on $X$, $\widehat{X}$ endowed with the compact-open topology is denoted by $X^{\wedge}$. The homomorphism $\alpha_X : X\to X^{\wedge\wedge} $, $x\mapsto (\chi\mapsto (\chi, x))$, is called {\it the canonical homomorphism}.  Denote by $\mathbf{n}(X) = \cap_{\chi\in \widehat{X}} {\rm ker}(\chi) = \ker(\alpha_X)$ the von Neumann radical of $X$. The group $X$ is called {\it minimally almost periodic} ($MinAP$) if $\mathbf{n}(X) = X$, and $X$ is called {\it maximally almost periodic} ($MAP$) if $\mathbf{n}(X) = \{ 0\}$. Let $H$ be a subgroup of $X$. The {\it annihilator} of $H$ we denote by $H^\perp$, i.e., $H^\perp =\{ \chi \in X^\wedge : (\chi, h)=1 \mbox{ for every } h\in H\}$.

Recall that an Abelian group  $G$ is {\it of finite exponent} or {\it bounded} if there exists a positive integer $n$ such that $ng=0$ for every $g\in G$. The minimal integer $n$ with this property is called the {\it exponent} of $G$ and is denoted by $\mathrm{exp}(G)$. When $G$ is not bounded, we write $\mathrm{exp}(G)=\infty$ and say that $G$ is {\it of infinte exponent}  or {\it unbounded}.
 The direct sum of $\omega$ copies of an Abelian group $G$ we denote by $G^{(\omega)}$.

Let $\mathbf{u}=\{ u_n\}_{n\in\omega}$ be a sequence in an Abelian group  $G$. In general no Hausdroff topology may exist in which  $\uuu$ converges to zero. A  very important question whether there exists a Hausdorff group topology $\tau$ on $G$ such that $u_n \to 0$ in $(G,\tau)$, especially for the integers, has been studied by many authors, see  Graev \cite{Gra}, Nienhuys \cite{Nie}, and others. Protasov and  Zelenyuk \cite{ZP1} obtained a criterion that gives a complete answer to this question. Following  \cite{ZP1}, we say that a sequence $\mathbf{u} =\{ u_n \}$ in an Abelian group $G$ is a $T$-{\it sequence} if there is a Hausdorff group topology on $G$ in which $u_n $ converges to zero. The finest group topology with this property we denote by $\Tu$.

The counterpart of the above question for {\it precompact} group topologies on $\mathbb{Z}$ is studied by Raczkowski \cite{Rac}. Following \cite{BDM, BDMW} and motivated by \cite{Rac}, we say that a sequence $\mathbf{u} =\{ u_n\}$ is a $TB$-{\it sequence} in an Abelian group $G$ if there is a precompact Hausdorff group topology on $G$ in which  $u_n $ converges to zero. For a $TB$-sequence $\mathbf{u}$ we denote by $\Tub$ the finest precompact group topology  on $G$ in which  $\mathbf{u}$ converges to zero. Clearly, every $TB$-sequence is a $T$-sequence, but in general, the converse assertion does not hold.

While it is quite hard to check whether a given sequence is a $T$-sequence (see, for example, \cite{Ga5,GaFin,Ga4,ZP1,ZP2}), the case of $TB$-sequences is much simpler. Let $X$ be an Abelian topological group and $\mathbf{u} =\{ u_n \}$ be a sequence in its dual group $X^{\wedge}$. Following \cite{DMT}, set
\[
s_{\mathbf{u}} (X)=\{ x\in X: \; (u_n, x)\to 1 \}.
 \]
In \cite{BDM} the following simple criterion to be a $TB$-sequence was obtained:
\begin{fact} {\rm  \cite{BDM}} \label{f1}
A sequence $\uuu$ in a (discrete) Abelian group $G$  is a $TB$-sequence if and only if the subgroup $s_{\mathbf{u}} (X)$ of the (compact) dual $X=G^\wedge$ is dense.
\end{fact}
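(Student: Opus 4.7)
The strategy is to invoke the classical Comfort--Ross correspondence between Hausdorff precompact group topologies on a discrete Abelian group $G$ and dense subgroups of its (compact) dual $X=G^\wedge$. Given a subgroup $H\subseteq X$, write $\sigma(G,H)$ for the initial topology on $G$ induced by the characters in $H$; this is always a precompact group topology, and by Pontryagin duality it is Hausdorff precisely when $H$ separates the points of $G$, which in turn is equivalent to $H$ being dense in $X$ (from $H^{\perp\perp}=\overline{H}$). Moreover, a sequence $(v_n)\subset G$ converges to $0$ in $\sigma(G,H)$ if and only if $(v_n,\chi)\to 1$ for every $\chi\in H$.

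\textbf{Necessity.} If $\uuu$ is a $TB$-sequence, fix a Hausdorff precompact group topology $\tau$ on $G$ with $u_n\to 0$, and set $H_\tau:=(G,\tau)^\wedge\subseteq X$. By the correspondence $H_\tau$ is dense in $X$. Since every $\chi\in H_\tau$ is $\tau$-continuous, the convergence $u_n\to 0$ forces $(u_n,\chi)\to 1$, so $H_\tau\subseteq s_{\uuu}(X)$. A fortiori $s_{\uuu}(X)$ is dense in $X$.

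\textbf{Sufficiency.} Conversely, assume $s_{\uuu}(X)$ is dense in $X$ and consider the precompact group topology $\tau:=\sigma(G,s_{\uuu}(X))$. Density of $s_{\uuu}(X)$ makes $\tau$ Hausdorff. By the very definition of $s_{\uuu}(X)$, every $\chi\in s_{\uuu}(X)$ satisfies $(u_n,\chi)\to 1$, which is exactly the condition for $u_n\to 0$ in $\tau$. Hence $\uuu$ is a $TB$-sequence.

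\textbf{Main obstacle.} No serious difficulty arises beyond unpacking definitions; the only nontrivial ingredient is the Comfort--Ross/Pontryagin correspondence, which is standard. The one point to handle with care is the equivalence, for $H\subseteq X=G^\wedge$ with $G$ discrete, between $H$ being dense in $X$ and $H$ separating the points of $G$; this is immediate from the annihilator identity $H^{\perp\perp}=\overline{H}$ together with the fact that the canonical map $G\to X^\wedge$ is injective.
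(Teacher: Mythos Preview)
Your proof is correct. Note, however, that the paper does not supply its own proof of this statement: it is recorded as a Fact with a citation to \cite{BDM}, so there is no in-paper argument to compare against. The approach you take---via the Comfort--Ross correspondence between Hausdorff precompact group topologies on $G$ and point-separating (equivalently, dense) subgroups of the compact dual $X=G^\wedge$---is exactly the standard one and is essentially how the result is obtained in the cited source. Both directions are handled cleanly: in the necessity direction you correctly observe that the dual of any witnessing precompact topology already sits inside $s_{\uuu}(X)$, and in the sufficiency direction the topology $\sigma(G,s_{\uuu}(X))$ is the natural (indeed, the finest) precompact witness $\tau_{b\uuu}$.
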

Motivated by Fact \ref{f1}, Dikranjan et al. \cite{DMT} introduced the following notion related to subgroups of the form $s_{\mathbf{u}} (X)$ of a compact Abelian group $X$:
\begin{definition} {\rm  \cite{DMT}} \label{d1}
Let $H$ be a subgroup of a compact Abelian group $X$ and $\mathbf{u} =\{ u_n \}$ be a sequence in  $\widehat{X}$. If $H=s_{\mathbf{u}} (X)$ we say that $\mathbf{u}$ {\em characterizes} $H$ and that $H$ is {\em characterized} (by $\mathbf{u}$).
\end{definition}
Note that for the torus $\TT$ this notion was already defined in \cite{BDS}.
Characterized subgroups has been studied by many authors, see, for example, \cite{BSW, BDS, DG, DiK, DMT, Ga}. In particular, the main theorem of \cite{DiK} (see also \cite{BSW})  asserts that every countable subgroup of a compact metrizable Abelian group is characterized. It is natural to ask whether a closed subgroup of a compact Abelian group is characterized. The following easy criterion is given in \cite{DG}:
\begin{fact}  {\em \cite{DG}} \label{f2}
A closed subgroup $H$ of a compact Abelian group $X$ is characterized if and only if $H$ is a $G_\delta$-subgroup. In particular, $X/H$ is metrizable and the annihilator $H^\perp$ of $H$ is countable.
\end{fact}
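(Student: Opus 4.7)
The plan is to leverage Pontryagin duality between a compact Hausdorff Abelian group $X$ and its discrete dual $\widehat{X}$. For a closed subgroup $H$ of $X$ one has the natural identification $H^\perp \cong \widehat{X/H}$, and since $X/H$ is a compact Hausdorff Abelian group, it is metrizable iff it is first countable iff $H$ is a $G_\delta$-subgroup of $X$, which is in turn equivalent to the countability of its dual $H^\perp$. The content of the fact is therefore the equivalence: a closed subgroup $H$ is characterized if and only if $H^\perp$ is countable.

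For the direction $(\Leftarrow)$, assume $H$ is closed and $G_\delta$, so $H^\perp$ is countable. Enumerate $H^\perp = \{u_n : n \in \NN\}$ in such a way that each of its elements occurs infinitely often, and set $\uuu = \{u_n\}$. Every $x \in H$ satisfies $(u_n, x)=1$ for all $n$, giving $H \subseteq s_{\uuu}(X)$. Conversely, if $x \notin H$, closedness of $H$ and Pontryagin duality furnish $\chi \in H^\perp$ with $(\chi, x)\neq 1$; because $\chi$ occurs as $u_n$ for infinitely many $n$, the sequence $((u_n,x))$ cannot converge to $1$, so $x\notin s_{\uuu}(X)$. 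Hence $H=s_{\uuu}(X)$.

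For the direction $(\Rightarrow)$, assume $H=s_{\uuu}(X)$ for some $\uuu=\{u_n\}\subseteq \widehat{X}$. The crucial move is to introduce the closed subgroup
\[
L := \bigcap_{n \in \NN} \ker(u_n),
\]
which is automatically contained in $s_{\uuu}(X)=H$, because for $x\in L$ every $(u_n,x)$ equals $1$ and so converges to $1$ trivially. The characters $u_n$ descend to characters of $X/L$ and by construction separate the points of $X/L$; since $\widehat{X/L}$ is discrete, Pontryagin duality forces $\widehat{X/L} = L^\perp = \langle u_n : n\in \NN\rangle$, which is a countable group. From $L\subseteq H$ we obtain $H^\perp \subseteq L^\perp$, so $H^\perp$ is countable and $H$ is a $G_\delta$-subgroup of $X$ with metrizable quotient $X/H$.

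The main conceptual step is the introduction of $L$ in the $(\Rightarrow)$ direction: replacing the a priori $F_{\sigma\delta}$ set $s_{\uuu}(X)$ by a genuine closed subgroup on which the $u_n$ act strictly trivially is what allows the discreteness of $\widehat{X/L}$ to transfer countability to $H^\perp$. The reverse direction is just the standard enumeration-with-repetitions trick applied to the countable annihilator.
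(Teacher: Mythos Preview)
Your proof is correct and follows the standard argument; the paper itself does not prove this statement but imports it as a known fact from \cite{DG}. The ingredients you use---the identification $H^\perp\cong\widehat{X/H}$, the equivalence between metrizability of $X/H$ and countability of $H^\perp$, and the observation that $K_\uuu:=\bigcap_n\ker(u_n)$ satisfies $K_\uuu^\perp=\langle\uuu\rangle$---are exactly the tools the paper records separately (see Fact~\ref{fL}, which is Lemma~2.2(i) of \cite{DG}), so your argument is essentially the one underlying the cited result.
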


The next fact  follows easily from Definition \ref{d1}:

\begin{fact} {\em (\cite{CRT}, see also \cite{DG})} \label{f21}
Every characterized subgroup $H$  of a compact Abelian group $X$ is an $F_{\sigma\delta}$-subgroup of $X$, and hence $H$ is a Borel subset of $X$.
\end{fact}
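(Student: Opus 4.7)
The plan is to unwind the definition of convergence in the circle group $\TT$ and express $s_{\mathbf{u}}(X)$ as an intersection of countably many $F_\sigma$-sets, thereby exhibiting it as an $F_{\sigma\delta}$-set.

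First I would fix a decreasing countable base $\{\bar{V}_k\}_{k\in\NN}$ of closed neighborhoods of $1$ in $\TT$, e.g.\ $\bar{V}_k = \{z\in\TT:\, |z-1|\le 1/k\}$. Since each character $u_n\in\widehat{X}$ is continuous, the set
\[
A_{n,k} := \{ x\in X : (u_n,x)\in \bar{V}_k \} = u_n^{-1}(\bar{V}_k)
\]
is closed in $X$. Because the $\bar V_k$ form a neighborhood base at $1$, the statement $(u_n,x)\to 1$ is equivalent to: for every $k\in\NN$ there exists $N\in\NN$ such that $(u_n,x)\in \bar V_k$ for all $n\ge N$.

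Rewriting this set-theoretically gives
\[
H = s_{\mathbf{u}}(X) = \bigcap_{k\in\NN} \bigcup_{N\in\NN} \bigcap_{n\ge N} A_{n,k}.
\]
For fixed $k$ and $N$, the inner intersection $\bigcap_{n\ge N} A_{n,k}$ is closed, so the union over $N$ is an $F_\sigma$-set in $X$, and finally the intersection over $k$ is an $F_{\sigma\delta}$-set. Since every $F_{\sigma\delta}$-subset of $X$ is Borel, this also yields the second assertion.

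The argument is essentially bookkeeping, so there is no serious obstacle; the only point that deserves care is the choice of \emph{closed} neighborhoods $\bar V_k$ (rather than open ones), which is what turns the preimages under the continuous characters $u_n$ into closed sets and produces the desired $F_{\sigma\delta}$-description instead of the trivial $G_{\delta\sigma\delta}$-one.
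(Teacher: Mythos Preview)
Your argument is correct and is precisely the standard one the paper alludes to when it remarks that the fact ``follows easily from Definition~\ref{d1}'' (the paper itself does not spell out a proof but cites \cite{CRT,DG}). The decomposition $s_{\mathbf u}(X)=\bigcap_k\bigcup_N\bigcap_{n\ge N}u_n^{-1}(\bar V_k)$ with closed $\bar V_k$ is exactly the intended mechanism, and your remark about using \emph{closed} neighborhoods to land in $F_{\sigma\delta}$ rather than $G_{\delta\sigma\delta}$ is the only nontrivial point.
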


Facts \ref{f2} and \ref{f21} inspired in \cite{DG} the study of the Borel hierarchy of characterized subgroups of compact Abelian groups. For a compact Abelian group $X$ denote by $\mathrm{Char} (X)$ (respectively, $\mathrm{SF}_\sigma(X)$, $\mathrm{SF}_{\sigma\delta}(X)$ and $\mathrm{SG}_\delta(X)$) the set of all characterized subgroups (respectively, $F_\sigma$-subgroups, $F_{\sigma\delta}$-subgroups and $G_\delta$-subgroups) of $X$. The next fact is Theorem E in  \cite{DG}:
\begin{fact} {\em  \cite{DG}} \label{fBor}
For every infinite compact Abelian group $X$, the following inclusions hold:
\[
\mathrm{SG}_\delta(X) \subsetneqq \mathrm{Char}(X) \subsetneqq \mathrm{SF}_{\sigma\delta}(X)\ \  \mbox{ and } \  \  \mathrm{SF}_\sigma(X) \not \subseteq  \mathrm{Char}(X).\]
If in addition $X$ has finite exponent, then
\begin{equation} \label{Bor}
\mathrm{Char}(X)  \subsetneqq \mathrm{SF}_\sigma(X).
\end{equation}
\end{fact}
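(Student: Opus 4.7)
The plan is to split the statement into four assertions and handle each separately. The two containments $\mathrm{SG}_\delta(X)\subseteq \mathrm{Char}(X)$ and $\mathrm{Char}(X)\subseteq \mathrm{SF}_{\sigma\delta}(X)$ are immediate from Facts \ref{f2} and \ref{f21}, so only the three strictness statements and the finite-exponent refinement require real work.

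To obtain $\mathrm{SG}_\delta(X)\subsetneq \mathrm{Char}(X)$, I would exhibit a characterized subgroup that is not closed. Since $\widehat X$ is infinite, fix a countably infinite subgroup $A\leq\widehat X$ and set $N:=A^\perp$; then $X/N$ is an infinite compact metrizable Abelian group, because $\widehat{X/N}=N^\perp=A^{\perp\perp}=A$ is countable infinite. By the Dikranjan--Kunen theorem \cite{DiK}, any proper countable dense subgroup $K/N$ of $X/N$ is characterized in $X/N$ by some $\uuu\subseteq \widehat{X/N}=N^\perp$. Viewing $\uuu$ inside $\widehat X$ and using that each $u_k$ factors through $X/N$, one computes $s_\uuu(X)=\pi^{-1}(s_\uuu(X/N))=\pi^{-1}(K/N)=K$, where $\pi\colon X\to X/N$ is the quotient. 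Since $K/N$ is dense and proper in $X/N$, the subgroup $K$ is dense and proper in $X$ and therefore not closed, giving $K\in \mathrm{Char}(X)\setminus \mathrm{SG}_\delta(X)$.

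For the finite-exponent inclusion $\mathrm{Char}(X)\subseteq \mathrm{SF}_\sigma(X)$, I would use that when $\mathrm{exp}(X)=n<\infty$ every character of $X$ takes values in the finite group of $n$-th roots of unity. Consequently $(u_k,x)\to 1$ is equivalent to $(u_k,x)=1$ eventually, so
\[
s_\uuu(X)=\bigcup_{N\in\NN}\bigcap_{k\geq N}\ker u_k
\]
displays $s_\uuu(X)$ as a countable union of closed subgroups, i.e.\ an $F_\sigma$-set. With this inclusion established, the strict inclusion $\mathrm{Char}(X)\subsetneq \mathrm{SF}_\sigma(X)$ in the finite-exponent case reduces to the non-inclusion $\mathrm{SF}_\sigma(X)\not\subseteq \mathrm{Char}(X)$.

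The main obstacle is producing $F_\sigma$ and $F_{\sigma\delta}$ subgroups that are provably not of the form $s_\uuu(X)$. For $\mathrm{SF}_\sigma(X)\not\subseteq \mathrm{Char}(X)$, when $X$ is not metrizable the trivial subgroup $\{0\}$ is closed (hence $F_\sigma$) but fails to be $G_\delta$ (otherwise $X$ would be metrizable), so by Fact \ref{f2} it is not characterized. In the metrizable case one must build an $F_\sigma$ subgroup $H=\bigcup_n H_n$ whose filtration of annihilators $\{H_n^\perp\}$ blocks any enumeration $u_k\in H_k^\perp$ from achieving $\langle u_k:k\geq N\rangle=H_N^\perp$ for all $N$; this finite-generation obstruction is what forces the failure to be characterized. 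An analogous but more intricate construction is needed for $\mathrm{Char}(X)\subsetneq \mathrm{SF}_{\sigma\delta}(X)$: starting from the three-quantifier representation
\[
s_\uuu(X)=\bigcap_m\bigcup_N\bigcap_{k\geq N}\bigl\{x\in X:|1-(u_k,x)|\leq 1/m\bigr\},
\]
one produces an $F_{\sigma\delta}$ subgroup whose descriptive shape cannot be rearranged into this form. These last two steps are the technically hardest pieces of the proof.
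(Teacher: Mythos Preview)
This statement is quoted in the paper as Fact~\ref{fBor} from \cite[Theorem~E]{DG} and is not proved here; there is no proof in the present paper to compare your proposal against.

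On the merits of your sketch: the routine parts are correct. The two inclusions are exactly Facts~\ref{f2} and~\ref{f21}; your construction of a proper dense characterized subgroup via a metrizable quotient and \cite{DiK} is essentially \cite[Lemma~3.6]{DG} (which the paper itself invokes in the proof of Theorem~\ref{tS}(2)); and your finite-exponent argument that $s_\uuu(X)=\bigcup_N\bigcap_{k\ge N}\ker u_k$ is the standard one.

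The genuine gap is in the two hard assertions, which you explicitly leave unfinished. For $\mathrm{SF}_\sigma(X)\not\subseteq\mathrm{Char}(X)$ in the metrizable case, your ``finite-generation obstruction'' is not a proof: you neither specify the subgroups $H_n$ nor justify the premise that a characterizing sequence for $H=\bigcup_n H_n$ must eventually lie in each $H_n^\perp$. In fact it need not: for non-closed $H$ the condition $(u_k,h)\to1$ does not force $u_k\in H_n^\perp$ for any tail, so the obstruction as you phrase it does not bite. Likewise, for $\mathrm{Char}(X)\subsetneq\mathrm{SF}_{\sigma\delta}(X)$ you offer only the three-quantifier display of $s_\uuu(X)$ and the remark that ``an analogous but more intricate construction is needed''. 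These two constructions are precisely where the content of \cite[Theorem~E]{DG} resides, and your proposal does not supply them.
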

The inclusion (\ref{Bor}) inspired the following question:
\begin{question} {\rm \cite[Problem 3.3]{DG}} \label{q2}
Does there exist a compact Abelian group $X$ of {\rm infinite} exponent whose all characterized subgroups are $F_\sigma$-subsets of $X$?
\end{question}

{\bf Main results.} It is important to emphasize that there is no any restriction on a sequence $\uuu$ in Definition \ref{d1}. If a characterized subgroup $H$ of a compact Abelian group $X$ is dense, then, by Fact \ref{f1}, a characterizing sequence is also a $TB$-sequence. But if $H$ is not dense,  we can not expect in general that a characterizing sequence of $H$ is a $T$-sequence. Thus it is natural to ask:
\begin{question} \label{q1}
For which characterized subgroups of compact Abelian groups one can find  characterizing sequences which are also  $T$-sequences?
\end{question}

This question is of independent interest because every $T$-sequence $\uuu$ naturally defines the group topology $\Tu$ satisfying the following   dual property:
\begin{fact}  {\em \cite{Ga3}} \label{f4}
Let $H$ be a characterized subgroup of  an infinite compact  Abelian group $X$ by  a $T$-sequence $\uuu$. Then  $(\widehat{X},\Tu)^\wedge =H(=s_\mathbf{u} (X))$ and $\mathbf{n}(\widehat{X},\Tu)=H^\perp$ algebraically.
\end{fact}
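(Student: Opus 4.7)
The key idea will be a duality-theoretic description of continuous characters of $(\widehat X, \Tu)$. Since $X$ is compact, $\widehat X$ with the discrete topology has Pontryagin dual $X$, so every group homomorphism $\chi\colon \widehat X \to \TT$ has the form $\chi_x(u) = (u,x)$ for a unique $x \in X$. A continuous character of $(\widehat X, \Tu)$ is then precisely such a $\chi_x$ that happens to be $\Tu$-continuous, and the plan reduces to showing that this occurs if and only if $x \in H$.

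The implication ``$\chi_x \in (\widehat X, \Tu)^\wedge \Rightarrow x \in H$'' is immediate: $\Tu$-continuity of $\chi_x$ together with $u_n \to 0$ in $\Tu$ forces $\chi_x(u_n) = (u_n, x) \to 1$, giving $x \in s_\uuu(X) = H$. For the converse, given $x \in H$ I would pull back the topology of $\TT$ along $\chi_x$ to obtain a (possibly non-Hausdorff) group topology $\tau_x$ on $\widehat X$. Since $(u_n, x) \to 1$ by hypothesis, $u_n \to 0$ in $\tau_x$. Next I would form the join $\sigma := \Tu \vee \tau_x$: it is a group topology on $\widehat X$ in which $u_n \to 0$, and since $\sigma \ge \Tu$ and $\Tu$ is Hausdorff, $\sigma$ is Hausdorff as well. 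The defining maximality of $\Tu$ --- the finest Hausdorff group topology on $\widehat X$ with $u_n \to 0$ --- then forces $\sigma = \Tu$, so $\tau_x \le \Tu$ and $\chi_x$ is $\Tu$-continuous. This yields the algebraic identification $(\widehat X, \Tu)^\wedge = \{\chi_x : x \in H\}$, corresponding to $H$ under $x \leftrightarrow \chi_x$.

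The second assertion will then follow mechanically:
\[
\mathbf{n}(\widehat X, \Tu) \;=\; \bigcap_{\chi \in (\widehat X, \Tu)^\wedge} \ker\chi \;=\; \bigcap_{x \in H} \{u \in \widehat X : (u,x) = 1\} \;=\; H^\perp.
\]
The main delicate step, I expect, is the join construction: one must verify that the supremum of two group topologies is itself a group topology (standard via a neighborhood-base argument), and one must invoke that $\Tu$ is a genuine maximum, not merely a maximal element, in the poset of Hausdorff group topologies on $\widehat X$ in which $u_n \to 0$. This maximality is exactly what Protasov and Zelenyuk's theory of $T$-sequences guarantees by showing closure of the poset under arbitrary joins. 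Once this point is granted, the rest is a short exercise in Pontryagin duality.
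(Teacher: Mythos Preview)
The paper does not prove this statement: it is stated as a \emph{Fact} cited from \cite{Ga3} (Gabriyelyan, ``On a generalization of Abelian sequential groups,'' Fund.\ Math.\ \textbf{221} (2013)) and is used as a black box throughout. There is therefore no proof in the present paper to compare your attempt against.

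That said, your argument is correct and is the natural one. The identification of abstract characters of $\widehat X$ with points of $X$ via Pontryagin duality (using compactness of $X$ so that $\widehat X$ is discrete and $(\widehat X_d)^\wedge \cong X$), followed by the join-and-maximality trick to show $\chi_x$ is $\Tu$-continuous whenever $x\in s_\uuu(X)$, is exactly the right structure. Your caveat about $\Tu$ being a genuine maximum rather than merely a maximal element is well placed: this is precisely what the Protasov--Zelenyuk construction guarantees, since $\Tu$ is built as the supremum of all Hausdorff group topologies in which $u_n\to 0$, and that class is closed under arbitrary suprema. The derivation of $\mathbf n(\widehat X,\Tu)=H^\perp$ from the first part is then immediate.
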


 This motivates us to introduce the following notion:

\begin{definition} \label{d2}
Let $H$ be a subgroup of a compact Abelian group $X$. We say that $H$ is a {\em $T$-characterized  subgroup} of $X$ if there exists a $T$-sequence $\uuu =\{ u_n\}_{n\in\omega}$ in $\widehat{X}$ such that  $H= s_\uuu (X)$.
\end{definition}
Denote by $\mathrm{Char}_T (X)$  the set of all $T$-characterized subgroups  of a compact Abelian group $X$. Clearly, $\mathrm{Char}_T (X)\subseteq \mathrm{Char}(X)$. Hence, if a $T$-characterized subgroup $H$ of $X$ is  closed it is a $G_\delta$-subgroup of $X$ by Fact \ref{f2}. Note also that $X$ is $T$-characterized by the zero sequence.

The main goal of the article is to obtain a complete description of closed $T$-characterized subgroups (see Theorem \ref{t1}) and to study the Borel hierarchy of $T$-characterized subgroups (see Theorem \ref{tS})  of compact Abelian groups. In particular, we obtain a complete answer to Question \ref{q1} for closed characterized  subgroups and give  a negative answer to   Question \ref{q2}.

Note that, if a compact Abelian group $X$ is finite, then every $T$-sequence $\uuu$ in $\widehat{X}$ is eventually equal to zero. Hence $s_\uuu(X)=X$. Thus $X$ is the unique $T$-characterized subgroup of $X$. So in what follows we shall consider only infinite compact groups.

The following theorem describes all closed    subgroups of compact Abelian groups which are $T$-characterized.
\begin{theorem} \label{t1}
Let $H$ be a proper closed subgroup of an infinite compact Abelian group $X$. Then the following assertions are equivalent:
\begin{enumerate}
\item[{\rm (1)}] $H$ is a $T$-characterized subgroup of $X$;
\item[{\rm (2)}] $H$ is a $G_\delta$-subgroup of $X$ and the countable group $H^\perp$ admits a Hausdorff MinAP group topology;
\item[{\rm (3)}] $H$ is a $G_\delta$-subgroup of $X$ and one of the following holds:
\begin{enumerate}
\item $H^\perp$ has infinite exponent;
\item $H^\perp$ has finite exponent and contains a subgroup which is isomorphic to $\mathbb{Z}\left(\exp(H^\perp)\right)^{(\omega)}$.
\end{enumerate}
\end{enumerate}
\end{theorem}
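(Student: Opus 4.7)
The plan is to prove the equivalences via the cycle $(2)\Leftrightarrow(3)$, $(2)\Rightarrow(1)$, $(1)\Rightarrow(2)$. First note that in each of (1), (2), (3) the subgroup $H$ is forced to be closed and $G_\delta$ in $X$ (by Fact~\ref{f2} in case (1); by hypothesis in (2) and (3)), so $X/H$ is compact metrizable and $H^\perp\cong\widehat{X/H}$ is a countable discrete Abelian group.

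The equivalence $(2)\Leftrightarrow(3)$ is a purely algebraic characterization of countable Abelian groups admitting a Hausdorff MinAP group topology, and is a known result: in the unbounded case classical constructions (going back to Graev, Nienhuys, and Protasov--Zelenyuk) produce a MinAP topology, while in the bounded case of exponent $n$ the presence of a $\ZZ(n)^{(\omega)}$-summand is both necessary and sufficient---the necessity arises because a bounded group lacking this summand decomposes with only finitely many cyclic summands of maximal order $n$, producing an algebraic obstruction to extinguishing all continuous characters simultaneously.

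For $(2)\Rightarrow(1)$, I would invoke the fact that every Hausdorff MinAP topology on a countable Abelian group can be refined to one of the form $\Tu$ for some $T$-sequence $\mathbf{u}$ that remains MinAP. Applied to $G=H^\perp$, this yields a $T$-sequence $\mathbf{u}\subseteq H^\perp$ with $(H^\perp,\Tu)$ MinAP. Extending $\Tu$ to a Hausdorff group topology on $\widehat{X}$ by declaring $\widehat{X}/H^\perp$ discrete realizes $\mathbf{u}$ as a $T$-sequence in $\widehat{X}$. Since $\mathbf{u}\subseteq H^\perp$, every $u_n$ factors through the quotient $q\colon X\to X/H$, giving
\[
s_{\mathbf{u}}(X)=q^{-1}\bigl(s_{\mathbf{u}}(X/H)\bigr).
\]
Applying Fact~\ref{f4} inside the compact group $X/H$ (whose dual is identified with $H^\perp$) yields $s_{\mathbf{u}}(X/H)=(H^\perp,\Tu)^{\wedge}=\{0\}$, hence $s_{\mathbf{u}}(X)=q^{-1}(0)=H$, as required.

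For $(1)\Rightarrow(2)$, suppose $H=s_{\mathbf{u}}(X)$ for a $T$-sequence $\mathbf{u}$ in $\widehat{X}$. Fact~\ref{f4} gives $\mathbf{n}(\widehat{X},\Tu)=H^\perp$ algebraically, and the natural candidate MinAP topology on $H^\perp$ is the subspace topology inherited from $(\widehat{X},\Tu)$, which is automatically Hausdorff. Verifying its MinAP property is the main obstacle: a character of $H^\perp$ continuous in this subspace topology need not extend continuously to $\widehat{X}$, so the triviality of all continuous characters of $(\widehat{X},\Tu)$ on $H^\perp$ does not immediately force triviality of all subspace-continuous characters of $H^\perp$. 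The crux of the theorem lies in this delicate passage---and its analogue in $(2)\Rightarrow(1)$---between abstract MinAP group topologies and the more rigid $T$-sequence topologies, because na\"ive refinement to a $T$-sequence topology can introduce new continuous characters and destroy MinAP.
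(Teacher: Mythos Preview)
Your treatment of $(2)\Leftrightarrow(3)$ and $(2)\Rightarrow(1)$ matches the paper's: both directions are reduced to the external result (Theorem~3.8 of \cite{Ga4}) that a countably infinite Abelian group admits a Hausdorff MinAP topology if and only if it satisfies the algebraic alternative in (3), and that moreover such a topology can always be taken of the form $\tau_{\mathbf{u}}$ for some $T$-sequence $\mathbf{u}$. The lift via the quotient $q\colon X\to X/H$ is exactly how the paper argues.

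The gap is in $(1)\Rightarrow(2)$, and you have correctly located it yourself without closing it. Knowing $\mathbf{n}(\widehat{X},\Tu)=H^\perp$ does not, a priori, give a MinAP topology on $H^\perp$: the restriction of $\Tu$ to $H^\perp$ is Hausdorff, but you have no reason to believe that every $\Tu$-continuous character on $H^\perp$ extends to $\widehat{X}$, so you cannot conclude MinAP. The paper resolves this not by proving an extension theorem for $H^\perp$ directly, but by replacing $\mathbf{u}$ with a suitable tail $\mathbf{v}=\mathbf{u}_m$ so that $\langle\mathbf{v}\rangle=H^\perp$ \emph{exactly}. The point is that $\langle\mathbf{v}\rangle$ is always \emph{open} in $(\widehat{X},\tau_{\mathbf{v}})$ (Lemma~\ref{l11}), hence dually closed and dually embedded, and then $\mathbf{n}(\langle\mathbf{v}\rangle,\tau_{\mathbf{v}}|_{\langle\mathbf{v}\rangle})=\mathbf{n}(\widehat{X},\tau_{\mathbf{v}})=H^\perp=\langle\mathbf{v}\rangle$, giving MinAP.

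Finding such an $m$ is the substantive step you are missing. The paper first reduces to the metrizable quotient $\widetilde{X}=X/K_{\mathbf{u}}$, so that $\widetilde{G}=\widehat{\widetilde{X}}$ is countable. By Fact~\ref{f3}, $(\widetilde{G},\tau_{\widetilde{\mathbf{u}}})^\wedge$ is Polish; by Fact~\ref{f4} it equals $\widetilde{H}$ algebraically; uniqueness of Polish group topologies then forces $(\widetilde{G},\tau_{\widetilde{\mathbf{u}}})^\wedge\cong\widetilde{H}$ \emph{topologically}, so the bidual $(\widetilde{G},\tau_{\widetilde{\mathbf{u}}})^{\wedge\wedge}$ is discrete. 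Since the canonical map $\widetilde{\alpha}$ is continuous into a discrete group, $\ker(\widetilde{\alpha})=\mathbf{n}(\widetilde{G},\tau_{\widetilde{\mathbf{u}}})$ is open, and therefore contains $\widetilde{u}_n$ for all $n\ge m$. Combining openness of $\langle\widetilde{\mathbf{u}}_m\rangle$ with Lemma~4 of \cite{Ga} gives $\langle\widetilde{\mathbf{u}}_m\rangle=\ker(\widetilde{\alpha})$, and lifting back yields $\langle\mathbf{u}_m\rangle=H^\perp$. Without this Polish-uniqueness/open-kernel argument (or an equivalent device), your outline does not go through.
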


\begin{corollary} \label{c1}
Let $X$ be an infinite compact metrizable Abelian group. Then the trivial subgroup  $H=\{ 0\}$ is  $T$-characterized if and only if $\widehat{X}$ admits a Hausdorff MinAP group topology.
\end{corollary}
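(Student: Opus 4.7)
The plan is to derive this as an immediate specialization of Theorem \ref{t1} to the case $H=\{0\}$. First I would check that the hypotheses of Theorem \ref{t1} are satisfied: since $X$ is infinite, the trivial subgroup is a proper closed subgroup of $X$, so the theorem is applicable. I would then verify that condition (2) of Theorem \ref{t1} simplifies, in this particular case, to the single statement ``$\widehat{X}$ admits a Hausdorff MinAP group topology''.

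For the simplification, two observations suffice. First, because $X$ is compact and metrizable, every point of $X$ is a $G_\delta$-set, so $H=\{0\}$ is automatically a $G_\delta$-subgroup of $X$; this removes the first clause of condition (2). Second, by the definition of the annihilator we have $H^\perp = \{0\}^\perp = \widehat{X}$, and since $X$ is compact metrizable, its dual $\widehat{X}$ is countable, which is consistent with the countability requirement in Theorem \ref{t1}(2). Thus condition (2) of Theorem \ref{t1}, specialized to $H=\{0\}$, reads precisely: $\widehat{X}$ admits a Hausdorff MinAP group topology.

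Applying the equivalence (1)$\Leftrightarrow$(2) of Theorem \ref{t1} then yields the corollary. Since this is a direct specialization, I do not anticipate any genuine obstacle; the only thing to be careful about is to note that in the metrizable setting the $G_\delta$ condition becomes vacuous for the trivial subgroup, so the criterion reduces to the MinAP condition on the dual alone.
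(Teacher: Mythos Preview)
Your proposal is correct and is exactly the derivation the paper intends: Corollary~\ref{c1} is stated in the paper without a separate proof, as an immediate specialization of Theorem~\ref{t1} to $H=\{0\}$, using metrizability to ensure the $G_\delta$ clause is automatic and the identification $\{0\}^\perp=\widehat{X}$.
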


As an immediate corollary of Fact \ref{f2} and Theorem \ref{t1} we obtain
a complete answer to Question \ref{q1} for closed characterized  subgroups.
\begin{corollary} \label{c11}
A proper closed characterized subgroup $H$ of an infinite  compact Abelian group $X$ is  $T$-characterized if and only if $H^\perp$ admits a Hausdorff MinAP group topology.
\end{corollary}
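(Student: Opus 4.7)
The plan is to read off Corollary~\ref{c11} directly from Fact~\ref{f2} combined with the equivalence $(1)\Leftrightarrow(2)$ of Theorem~\ref{t1}; no additional construction is needed. The key observation is that for a closed subgroup $H$, Fact~\ref{f2} says ``characterized'' is synonymous with ``$G_\delta$'', which is precisely the first half of the hypothesis in clause~(2) of Theorem~\ref{t1}.

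For the forward direction, I would first recall that a $T$-characterized subgroup is characterized in the sense of Definition~\ref{d1}, so by Fact~\ref{f2} (using closedness) it is a $G_\delta$-subgroup of $X$. The implication $(1)\Rightarrow(2)$ of Theorem~\ref{t1} then yields that $H^\perp$ admits a Hausdorff MinAP group topology.

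For the converse, assume $H$ is a proper closed characterized subgroup and that $H^\perp$ admits a Hausdorff MinAP topology. Fact~\ref{f2} again delivers the $G_\delta$ property of $H$, so both conditions of clause~(2) of Theorem~\ref{t1} are met; the implication $(2)\Rightarrow(1)$ then produces a $T$-sequence $\uuu \subset \widehat{X}$ with $H = s_\uuu (X)$, witnessing that $H$ is $T$-characterized.

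There is no substantive obstacle here: all the nontrivial work has already been absorbed into Theorem~\ref{t1} (where the hard direction is the construction of the requisite MinAP topology and the associated $T$-sequence) and into Fact~\ref{f2}. The only point worth making explicit in the write-up is that closedness together with being characterized forces, via Fact~\ref{f2}, the $G_\delta$ hypothesis of Theorem~\ref{t1} to hold automatically in both directions, which is why the criterion for $T$-characterizability collapses to the single MinAP condition on $H^\perp$.
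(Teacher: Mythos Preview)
Your proposal is correct and matches the paper's own justification: the paper simply states that Corollary~\ref{c11} is an immediate corollary of Fact~\ref{f2} and Theorem~\ref{t1}, and your write-up spells out precisely that deduction. The only minor redundancy is that in the forward direction you invoke Fact~\ref{f2} to obtain the $G_\delta$ property, whereas Theorem~\ref{t1}(1)$\Rightarrow$(2) already delivers it; but this does no harm.
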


If  $H$ is an open proper subgroup of $X$, then $H^\perp$ is non-trivial and finite. Thus every Hausdorff group topology on $H^\perp$ is discrete. Taking into account Fact \ref{f2} we obtain:
\begin{corollary} \label{c2}
Every open proper subgroup $H$ of an infinite  compact Abelian group $X$ is a characterized   non-$T$-characterized subgroup of $X$.
\end{corollary}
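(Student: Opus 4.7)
The plan is to split the claim into its two conjuncts and dispatch each with the tools already developed. For the characterization part I would invoke Fact \ref{f2} directly; for the failure of $T$-characterization I would invoke the equivalence (1)$\Leftrightarrow$(2) of Theorem \ref{t1}.

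For the first part, any open subgroup of a topological group is automatically closed, and it is trivially a $G_\delta$-set (it equals a countable intersection of copies of itself). Thus $H$ is a closed $G_\delta$-subgroup of $X$, and Fact \ref{f2} yields $H\in \mathrm{Char}(X)$ immediately.

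For the second part, I would show that $H^\perp$ admits no Hausdorff MinAP group topology, so that condition (2) of Theorem \ref{t1} fails and hence so does (1). The key structural input is that $X/H$ is compact (since $X$ is compact and $H$ is closed) and discrete (since $H$ is open), hence finite; consequently $H^\perp \cong (X/H)^\wedge$ is finite, and it is non-trivial because $H$ is a proper subgroup. On a finite group the only Hausdorff group topology is the discrete one, and any non-trivial finite discrete Abelian group is $MAP$: its continuous characters coincide with its abstract characters, which separate points by Pontryagin duality for finite Abelian groups. Therefore $\mathbf{n}(H^\perp) = \{0\} \neq H^\perp$ under the unique Hausdorff group topology on $H^\perp$, which rules out MinAP.

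I do not foresee any substantive obstacle: the corollary is a clean specialization of Theorem \ref{t1} to the situation where $H$ is open, once one observes that openness forces $H^\perp$ to be non-trivial and finite. The only auxiliary fact worth isolating is the elementary observation that a non-trivial finite discrete Abelian group has separating characters, and hence cannot be MinAP.
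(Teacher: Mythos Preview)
Your proposal is correct and follows essentially the same route as the paper: the paper's proof is the short sentence preceding the corollary, observing that $H^\perp$ is non-trivial and finite (hence every Hausdorff group topology on it is discrete, so no MinAP topology exists) and invoking Fact~\ref{f2} for the characterization part, which is exactly your argument via Theorem~\ref{t1}(1)$\Leftrightarrow$(2) spelled out in more detail.
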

Nevertheless  (see Example \ref{exa1} below) there is a   compact metrizable Abelian group $X$ with a countable $T$-characterized subgroup $H$ such that its closure $\bar{H}$ is open.   Thus it may happened that the closure of a $T$-characterized subgroup is not  $T$-characterized.

It is natural to ask for which compact Abelian groups {\it all} their closed $G_\delta$-subgroups are $T$-characterized. The next theorem gives a complete answer to this question.

\begin{theorem} \label{t2}
Let $X$ be an infinite  compact Abelian group. The following  assertions are equivalent:
\begin{enumerate}
\item[{\rm (1)}] All closed $G_\delta$-subgroups of $X$ are $T$-characterized;
\item[{\rm (2)}] $X$ is connected.
\end{enumerate}
\end{theorem}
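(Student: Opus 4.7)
The plan is to reduce Theorem \ref{t2} to the structural criterion supplied by Theorem \ref{t1}, via the Pontryagin duality correspondence between closed $G_\delta$-subgroups of $X$ and countable subgroups of $\widehat{X}$. Recall that for a closed $H\le X$ one has $H^{\perp\perp}=H$, $H$ is $G_\delta$ in $X$ iff $X/H$ is metrizable iff $H^\perp$ is countable, and $H$ is proper iff $H^\perp\ne\{0\}$. The classical duality fact I will lean on in both directions is that $X$ is connected if and only if $\widehat{X}$ is torsion-free.

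For the implication (2) $\Rightarrow$ (1), assume $X$ is connected and let $H$ be a closed $G_\delta$-subgroup of $X$. The case $H=X$ is handled by the zero sequence (as noted after Definition \ref{d2}), so assume $H$ is proper. Then $H^\perp$ is a non-trivial countable subgroup of the torsion-free group $\widehat{X}$; in particular every non-zero element of $H^\perp$ has infinite order, so $\exp(H^\perp)=\infty$. Condition (3)(a) of Theorem \ref{t1} is then satisfied, and hence $H$ is $T$-characterized.

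For the contrapositive of (1) $\Rightarrow$ (2), suppose $X$ is disconnected. Then $\widehat{X}$ contains a torsion element, and hence an element of prime order $p$ for some prime $p$; let $A\le\widehat{X}$ be the cyclic subgroup it generates, so $A\cong\mathbb{Z}(p)$. Set $H:=A^\perp\subseteq X$. Then $H$ is a closed subgroup of $X$ with $H^\perp=A^{\perp\perp}=A$, which is finite (hence countable), so $H$ is a $G_\delta$-subgroup of $X$; moreover $H\ne X$ since $A\ne\{0\}$. The group $H^\perp\cong\mathbb{Z}(p)$ has finite exponent $p$, and it cannot contain an isomorphic copy of $\mathbb{Z}(p)^{(\omega)}$ because the latter is infinite while $|H^\perp|=p$. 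Thus neither (3)(a) nor (3)(b) of Theorem \ref{t1} holds, so $H$ is a closed $G_\delta$-subgroup of $X$ which fails to be $T$-characterized, contradicting (1).

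There is no real obstacle once Theorem \ref{t1} is in hand; the one point that deserves a line of justification is the equivalence between connectedness of $X$ and torsion-freeness of $\widehat{X}$, but this is a standard consequence of Pontryagin duality (torsion elements of $\widehat{X}$ are precisely the characters that factor through a non-trivial finite quotient of $X$, and such quotients exist iff $X$ is disconnected). The construction in the disconnected case is transparent because one only needs a single element of prime order in $\widehat{X}$, which is the minimum possible obstruction to the MinAP condition in Theorem \ref{t1}(3).
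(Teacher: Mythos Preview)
Your proof is correct and follows essentially the same approach as the paper's: both directions hinge on Theorem~\ref{t1} together with the duality fact that $X$ is connected iff $\widehat{X}$ is torsion-free. The only cosmetic difference is that for $(1)\Rightarrow(2)$ the paper picks any nonzero finite-order $g\in\widehat{X}$, observes that $\langle g\rangle^\perp$ is open, and invokes Corollary~\ref{c2}, whereas you pick an element of prime order and verify the failure of condition (3) in Theorem~\ref{t1} directly---but Corollary~\ref{c2} is itself just this observation packaged up.
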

By  Corollary 2.8 of \cite{DG}, the trivial subgroup  $H=\{ 0\}$ of  a compact Abelian group $X$ is a $G_\delta$-subgroup if and only if $X$ is metrizable. So we obtain:
\begin{corollary} \label{c3}
All  closed subgroups of an infinite compact Abelian group $X$ are  $T$-characterized if and only if $X$ is metrizable and connected.
\end{corollary}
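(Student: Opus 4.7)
The plan is to deduce this corollary directly from Theorem \ref{t2} together with the cited Corollary 2.8 of \cite{DG}. The key observation is that, for compact Abelian $X$, the class of closed $G_\delta$-subgroups coincides with the class of all closed subgroups precisely when $X$ is metrizable; this reduces the statement to a matter of inserting the metrizability hypothesis in the right place.

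For the forward direction, I would argue as follows. Suppose every closed subgroup of $X$ is $T$-characterized. Then in particular $\{0\}$ is $T$-characterized, so (being closed) it is a $G_\delta$-subgroup by Fact \ref{f2} (since $\mathrm{Char}_T(X)\subseteq \mathrm{Char}(X)$). By Corollary 2.8 of \cite{DG} this forces $X$ to be metrizable. Moreover, every closed $G_\delta$-subgroup of $X$ is in particular a closed subgroup and is therefore $T$-characterized by hypothesis, so Theorem \ref{t2} yields that $X$ is connected.

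For the converse, suppose $X$ is both metrizable and connected. Metrizability implies that every closed subgroup $H$ of $X$ is a $G_\delta$-subgroup: indeed, in a compact metrizable group the closed subsets are $G_\delta$ (by second countability, or equivalently, because $X/H$ is compact metrizable and $H$ is the preimage of its zero, a $G_\delta$-point). Connectedness of $X$ then lets us invoke Theorem \ref{t2} to conclude that every closed $G_\delta$-subgroup, and hence every closed subgroup, is $T$-characterized.

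There is no real obstacle here; the content is entirely packaged in Theorem \ref{t2} and in the metrizability criterion for $\{0\}$ being $G_\delta$. The only point requiring a line of justification is the standard fact that in a compact metrizable Abelian group every closed subgroup is $G_\delta$, but this is immediate from second countability or from the cited Corollary 2.8 applied inside the (compact metrizable) quotient $X/H$.
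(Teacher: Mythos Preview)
Your proof is correct and follows essentially the same approach as the paper, which derives the corollary directly from Theorem~\ref{t2} together with the observation (Corollary~2.8 of \cite{DG}) that $\{0\}$ is a $G_\delta$-subgroup if and only if $X$ is metrizable. You have simply spelled out in full the details that the paper leaves implicit.
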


Theorems \ref{t1} and \ref{t2} are proved in Section \ref{s1}.

In the next theorem we give a negative answer to Question \ref{q2}:
\begin{theorem} \label{t3}
Every compact Abelian group of infinite exponent has a dense $T$-characterized subgroup which is not an $F_\sigma$-subgroup.
\end{theorem}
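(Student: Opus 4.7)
The plan is to reduce the theorem to the case where $X$ is compact metrizable, and then solve the metrizable case by an inductive construction of a suitably lacunary $T$-sequence.

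For the reduction, since $X$ has infinite exponent, so does its discrete dual $\widehat X$. Pick a countable subgroup $L\le\widehat X$ of infinite exponent (e.g., choose characters of unboundedly large order, or include an element of infinite order if one exists). Let $Y=\widehat L$; this is a compact metrizable Abelian group with $\widehat Y=L$, hence of infinite exponent, and the inclusion $L\hookrightarrow\widehat X$ dualizes to a continuous open surjection $p:X\twoheadrightarrow Y$. Suppose we have produced a $T$-sequence $\uuu=(u_n)$ in $L=\widehat Y$ with $H_0:=s_\uuu(Y)$ dense in $Y$ and not $F_\sigma$. I claim that the same sequence, viewed in $\widehat X$ through $L\hookrightarrow\widehat X$, works. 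First, $\uuu$ remains a $T$-sequence in $\widehat X$: take the $\Tu$-neighborhoods of $0$ in $L$ as a base at $0$ for a topology on $\widehat X$. The filter-base and group-topology axioms ($V+V\subseteq U$, $V=-V$) pass from $L$ to $\widehat X$ because $L$ is a subgroup, and the resulting topology is Hausdorff (for $g\in L\setminus\{0\}$ apply Hausdorffness of $\Tu$ on $L$, while for $g\in\widehat X\setminus L$ observe that $L$ itself belongs to the base) and satisfies $u_n\to 0$. Second, since $(\chi,x)=(\chi,p(x))$ for $\chi\in L$ and $x\in X$, we obtain $s_\uuu(X)=p^{-1}(H_0)$. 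This preimage is dense in $X$ (as $p$ is an open surjection with dense $H_0$), and it fails to be $F_\sigma$: otherwise, $X$ being compact, $p$ would send it to the $F_\sigma$-set $H_0=p(p^{-1}(H_0))$, contradicting the hypothesis on $H_0$.

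The main obstacle is then the construction, in a compact metrizable $Y$ of infinite exponent, of a $T$-sequence $\uuu\subseteq\widehat Y$ with $s_\uuu(Y)$ dense and not $F_\sigma$. A useful preliminary observation is that any proper dense $F_\sigma$-subgroup of a compact group is meager by Baire category (each of its closed constituents has empty interior, for otherwise the subgroup would be open, hence closed, hence equal to $Y$ by density). Thus it suffices to exhibit $s_\uuu(Y)$ as a proper dense \emph{non-meager} subgroup of $Y$. Splitting on whether $\widehat Y$ has an element of infinite order or is torsion of unbounded exponent, one further reduces to $Y=\TT$ or $Y=\prod_k\ZZ(n_k)$ with $n_k\to\infty$; in each such model one builds $\uuu$ inductively, choosing the $u_n$ of sufficiently lacunary growth to secure simultaneously (i) that $\uuu$ admits a Hausdorff (in fact, by Fact~\ref{f1}, precompact) group topology in which $u_n\to 0$, giving both the $T$-sequence property and density of $s_\uuu(Y)$; and (ii) that $s_\uuu(Y)$ contains a controlled non-meager Borel set while remaining proper. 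Balancing these two requirements against each other is the technical heart of the argument.
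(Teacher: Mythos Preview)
Your reduction to the metrizable case is essentially the paper's: pick a countable subgroup $L\le\widehat X$ of infinite exponent, pass to the quotient $Y=X/L^{\perp}$, lift a $T$-sequence from $L$ to $\widehat X$ via Lemma~\ref{l11}, and pull back the characterized subgroup. Two remarks on this part. First, your case split ``$\widehat Y$ has an element of infinite order, or is torsion of unbounded exponent $\Rightarrow$ reduce to $\TT$ or $\prod_k\ZZ(n_k)$'' is incomplete: a torsion group of unbounded exponent need not contain an infinite direct sum of cyclics (take $\ZZ(p^\infty)$, whose proper subgroups are all finite). The paper accordingly treats three model quotients --- $\TT$, $\Delta_p$, and $\prod_n\ZZ(b_n)$ --- via separate Lemmas~\ref{l1}, \ref{l2}, \ref{l3}.

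The real gap, however, is your plan for the metrizable case. You propose to show that $s_\uuu(Y)$ is a \emph{proper dense non-meager} subgroup of $Y$; since proper dense $F_\sigma$-subgroups are meager, this would indeed rule out $F_\sigma$. But such a subgroup cannot exist: $s_\uuu(Y)$ is Borel (indeed $F_{\sigma\delta}$), hence has the Baire property, and by Pettis' theorem a non-meager Baire-property subgroup of a Polish group is open. In a compact group an open subgroup is closed, so if it is also dense it equals $Y$. Thus every proper dense characterized subgroup of $Y$ is automatically meager, and your sufficient condition is vacuous.

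The paper avoids this obstruction by running the Baire-category argument not in $Y$ but in the Polish group $(H,\rho)$, where $\rho$ is the polishable metric~(\ref{01}) on $H=s_\uuu(Y)$. Assuming $H=\bigcup_n F_n$ with each $F_n$ compact in $Y$, one of the $F_n$ must contain a $\rho$-ball; the paper then constructs explicit points $\omega_k\in F_m$ (controlled simultaneously in $d$ and in all the characters $u_n$) whose $Y$-limit $\widetilde\omega$ lies in $F_m\subseteq H$ by compactness but violates the membership criterion for $H$, a contradiction. This interplay between the two topologies on $H$ --- closed in $\rho$, compact in $d$ --- is the key mechanism your outline is missing.
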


As a corollary of the inclusion (\ref{Bor}) and Theorem \ref{t3} we obtain:
\begin{corollary} \label{c4}
For an infinite compact Abelian group $X$ the following  assertions are equivalent:
\begin{enumerate}
\item[{\rm (i)}] $X$ has finite exponent;
\item[{\rm (ii)}] every characterized subgroup of $X$ is an $F_\sigma$-subgroup;
\item[{\rm (iii)}] every $T$-characterized subgroup of $X$ is an $F_\sigma$-subgroup.
\end{enumerate}
Therefore, $\mathrm{Char}(X)  \subseteq \mathrm{SF}_\sigma(X)$ if and only if $X$ has finite exponent.
\end{corollary}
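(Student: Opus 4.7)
The plan is to prove the three implications (i) $\Rightarrow$ (ii) $\Rightarrow$ (iii) $\Rightarrow$ (i), after which the final ``Therefore'' sentence follows immediately from the equivalence of (i) and (ii).

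For (i) $\Rightarrow$ (ii) I would simply quote the inclusion (\ref{Bor}) from Fact \ref{fBor}, which gives $\mathrm{Char}(X) \subsetneqq \mathrm{SF}_\sigma(X)$ whenever $X$ has finite exponent. The step (ii) $\Rightarrow$ (iii) is trivial from the observation made right after Definition \ref{d2}, namely $\mathrm{Char}_T(X) \subseteq \mathrm{Char}(X)$: every $T$-characterized subgroup is in particular characterized, so if all characterized subgroups are $F_\sigma$-sets then so are all $T$-characterized ones.

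The implication (iii) $\Rightarrow$ (i) is where the real content sits, but in fact it is already done: I would argue by contraposition and invoke Theorem \ref{t3} directly. If $X$ has infinite exponent, Theorem \ref{t3} produces a dense $T$-characterized subgroup of $X$ which is not an $F_\sigma$-subset of $X$, contradicting (iii). Hence (iii) forces $X$ to have finite exponent.

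Finally, the concluding equivalence $\mathrm{Char}(X) \subseteq \mathrm{SF}_\sigma(X) \Longleftrightarrow \mathrm{exp}(X) < \infty$ is just a restatement of (i) $\Leftrightarrow$ (ii), so no further work is needed. The main obstacle of the corollary is entirely hidden in the proof of Theorem \ref{t3}; at this point in the paper the corollary itself is essentially a bookkeeping statement, combining the previously recorded inclusion (\ref{Bor}) with the new existence result.
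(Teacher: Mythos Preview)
Your proof is correct and matches the paper's approach exactly: the corollary is stated there without a separate proof, introduced simply as ``a corollary of the inclusion (\ref{Bor}) and Theorem \ref{t3}'', which is precisely the cycle (i) $\Rightarrow$ (ii) $\Rightarrow$ (iii) $\Rightarrow$ (i) you spell out.
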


In the next theorem we summarize the obtained results about the Borel hierarchy of $T$-characterized subgroups of  compact Abelian groups.
\begin{theorem} \label{tS}
Let $X$ be an infinite compact Abelian group $X$. Then:
\begin{enumerate}
\item[{\rm (1)}] $\mathrm{Char}_T(X)  \subsetneqq \mathrm{SF}_{\sigma\delta}(X)$;
\item[{\rm (2)}] $\mathrm{SG}_{\delta}(X)\bigcap \mathrm{Char}_T(X)  \subsetneqq \mathrm{Char}_T(X)$;
\item[{\rm (3)}] $\mathrm{SG}_{\delta}(X) \subseteq \mathrm{Char}_T(X)$ if and only if $X$ is connected;
\item[{\rm (4)}] $\mathrm{Char}_T(X)\bigcap \mathrm{SF}_{\sigma}(X)  \subsetneqq  \mathrm{SF}_{\sigma}(X)$;
\item[{\rm (5)}] $\mathrm{Char}_T(X) \subseteq  \mathrm{SF}_{\sigma}(X)$ if and only if $X$ has finite exponent.
\end{enumerate}
\end{theorem}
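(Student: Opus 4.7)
The plan is to derive each of the five items from results already established in the paper, principally Fact \ref{fBor}, Theorem \ref{t2}, Theorem \ref{t3}, and Fact \ref{f1}. Items (1), (3), (4), and (5) should be one-step consequences, while (2) calls for a short but genuine construction, which I expect to be the main obstacle.

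For (1), I chain $\mathrm{Char}_T(X) \subseteq \mathrm{Char}(X) \subseteq \mathrm{SF}_{\sigma\delta}(X)$ by definition and Fact \ref{f21}, and strictness descends from $\mathrm{Char}(X) \subsetneqq \mathrm{SF}_{\sigma\delta}(X)$ in Fact \ref{fBor}. For (3), I simply unpack Theorem \ref{t2}, after first observing that every $G_\delta$-subgroup $H$ of a compact Hausdorff group is automatically closed: $H$ is a dense $G_\delta$-subgroup of its closure $\overline{H}$, hence comeager there by the Baire property, so any coset $x+H \subseteq \overline{H}$ disjoint from $H$ would also be comeager in $\overline{H}$, contradicting that two comeager sets in a Baire space meet. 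For (4), Fact \ref{fBor} records $\mathrm{SF}_\sigma(X) \not\subseteq \mathrm{Char}(X) \supseteq \mathrm{Char}_T(X)$, which is what we need. For (5), the ``finite exponent'' direction is inclusion (\ref{Bor}) composed with $\mathrm{Char}_T(X) \subseteq \mathrm{Char}(X)$, and the ``infinite exponent'' direction is exactly the content of Theorem \ref{t3}.

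The substance is in (2): I must exhibit a $T$-characterized subgroup lying outside $\mathrm{SG}_\delta(X)$, uniformly in $X$. My approach is to produce a proper dense $T$-characterized subgroup. Since $X$ is infinite, $\widehat{X}$ is infinite, so I can pick a countably infinite subgroup $L \leq \widehat{X}$ and set $K := L^\perp$, which makes $Y := X/K$ an infinite compact metrizable Abelian group with $\widehat{Y} = K^\perp \supseteq L$. By the Dikranjan--Kunen theorem cited just before Fact \ref{f2}, any countable dense subgroup $H_0 \leq Y$ is characterized by some sequence $\uuu_0$ in $\widehat{Y}$. Viewing $\uuu_0$ through the embedding $\widehat{Y} = K^\perp \hookrightarrow \widehat{X}$, the identity $(\chi, x) = (\chi, q(x))$ for $\chi \in K^\perp$ yields $s_{\uuu_0}(X) = q^{-1}(H_0) =: H$, which is visibly a proper dense subgroup of $X$. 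Now Fact \ref{f1}, applied inside $\widehat{X}$, forces $\uuu_0$ to be a $TB$-sequence, in particular a $T$-sequence, so $H \in \mathrm{Char}_T(X)$; and proper density prevents $H$ from being $G_\delta$ by the closedness remark above.

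The principal difficulty I anticipate is ensuring uniformity in (2): Theorem \ref{t3} is silent when $\mathrm{exp}(X) < \infty$, and the Dikranjan--Kunen theorem is stated only for metrizable targets, so neither applies directly to a non-metrizable finite-exponent $X$. The quotient-to-a-metrizable-group device above circumvents both restrictions simultaneously, which is what lets a single argument handle (2) for every infinite compact Abelian group.
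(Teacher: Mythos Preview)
Your proposal is correct and aligns with the paper's proof for items (1), (3), (4), and (5): the paper derives (1) and (4) from Fact~\ref{fBor}, (3) from Theorem~\ref{t2} together with the closedness of $G_\delta$-subgroups (cited from \cite{DG} rather than reproved via Baire category as you do), and (5) from Corollary~\ref{c4}, which packages exactly the two directions you spell out.

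The only noteworthy difference is in (2). The paper invokes Lemma~3.6 of \cite{DG} as a black box asserting that every infinite compact Abelian group has a proper dense characterized subgroup, then applies Fact~\ref{f1} and the closedness of $G_\delta$-subgroups. You instead reconstruct such a subgroup by hand: pass to a metrizable quotient $Y=X/K$ via a countable $L\leq\widehat{X}$, pick a countable dense $H_0\leq Y$, characterize it by Dikranjan--Kunen, and pull back. This is essentially the content of the cited lemma, so your route is more self-contained but not materially different; it also makes explicit the use of Lemma~\ref{l14} (the pullback step), which the paper's proof does not need to mention once Lemma~3.6 of \cite{DG} is granted.
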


We prove Theorems \ref{t3} and \ref{tS} in Section \ref{s2}.

The notions of $\mathfrak{g}$-closed and $\mathfrak{g}$-dense subgroups  of a compact Abelian group $X$ were  defined in \cite{DMT}. In the last section of the paper, in analogy to these notions, we define  $\mathfrak{g}_T$-closed and $\mathfrak{g}_T$-dense subgroups of $X$. In particular, we show that every $\mathfrak{g}_T$-dense subgroup of a compact Abelian group $X$ is dense if and only if $X$ is connected (see Theorem \ref{t41}).

\section{The Proofs of Theorems  \ref{t1} and \ref{t2}} \label{s1}

The subgroup of a group $G$ generated by a subset $A$ we denote by $\langle A\rangle$.

Recall that a subgroup $H$ of an Abelian topological group $X$ is called {\it dually closed} in $X$ if for every $x\in X\setminus H$ there exists a character $\chi \in H^\perp$ such that $(\chi,x)\not= 1$. $H$ is called {\it dually embedded} in $X$ if every character of $H$ can be extended to a character of $X$. Every open subgroup of $X$ is dually closed and dually embedded in $X$ by Lemma 3 of \cite{Nob}.

The next notion generalizes the notion of the maximal extension in the class of all compact Abelian groups introduced in \cite{DGT}.
\begin{definition} \label{dME}
Let $\mathcal{G}$ be an arbitrary class of topological groups. Let $(G,\tau)\in \mathcal{G}$ and $H$ be a subgroup of $G$. The group $(G,\tau)$ is called a {\em maximal extension  of $(H,\tau|_H)$ in the class $\mathcal{G}$} if $\sigma \leq \tau$ for every
group topology on $G$ such that $\sigma|_H =\tau|_H$ and $(G,\sigma)\in \mathcal{G}$.
\end{definition}
Clearly, the maximal extension is unique if it exists. Note that in Definition \ref{dME} we do not assume that $(H,\tau|_H)$ belongs to the class $\mathcal{G}$.

If $H$ is a subgroup of an Abelian group $G$ and $\uuu$ is a $T$-sequence (respectively, a $TB$-sequence) in $H$, we  denote by $\Tu(H)$ (respectively, $\Tub(H)$) the finest (respectively, precompact) group topology on $H$ generated by $\uuu$.
We use the following easy corollary of the definition of $T$-sequences.
\begin{lemma} \label{l11}
For a sequence $\uuu$ in an Abelian group $G$ the following  assertions are equivalent:
\begin{enumerate}
\item[{\rm (1)}] $\uuu$ is a $T$-sequence in $G$;
\item[{\rm (2)}] $\uuu$ is a $T$-sequence in every subgroup of $G$ containing $\langle\uuu\rangle$;
\item[{\rm (3)}] $\uuu$ is a $T$-sequence in  $\langle\uuu\rangle$.
\end{enumerate}
In this case, $\langle\uuu\rangle$ is open in $\Tu$ (and hence $\langle\uuu\rangle$ is dually closed and dually embedded in $(G, \Tu)$), and $(G, \Tu)$  is the maximal extension of  $(\langle\uuu\rangle, \Tu(\langle\uuu\rangle)$ in the class $\mathbf{TAG}$ of all Abelian topological groups.
\end{lemma}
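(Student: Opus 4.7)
The plan is to set up the three equivalences by a short cycle and then derive both the openness of $\langle\uuu\rangle$ and the maximal extension statement from a single extension construction: given a group topology on $\langle\uuu\rangle$, extend it to $G$ by declaring the subgroup $\langle\uuu\rangle$ to be open. The implication (2) $\Rightarrow$ (1) is trivial (take the ambient group $G$), and (2) $\Rightarrow$ (3) follows by taking $\langle\uuu\rangle$ itself. Both (1) $\Rightarrow$ (2) and (1) $\Rightarrow$ (3) are immediate: if $\tau$ is a Hausdorff group topology on $G$ with $u_n\to 0$, then the subspace topology $\tau|_H$ on any subgroup $H\supseteq\langle\uuu\rangle$ is Hausdorff and still witnesses $u_n\to 0$. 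The only substantive implication is (3) $\Rightarrow$ (1): starting from a Hausdorff group topology $\tau_0$ on $\langle\uuu\rangle$ in which $u_n\to 0$, I declare a $\tau_0$-neighborhood base of $0$ in $\langle\uuu\rangle$ to also be a neighborhood base of $0$ in $G$. The group-topology axioms transfer mechanically, Hausdorffness is inherited from $\tau_0$, $\langle\uuu\rangle$ becomes an open subgroup, and $u_n\to 0$ persists.

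For the openness of $\langle\uuu\rangle$ in $(G,\Tu)$, I apply the same construction to $\tau_0=\Tu(\langle\uuu\rangle)$. This produces a Hausdorff group topology $\sigma$ on $G$ in which $u_n\to 0$ and in which $\langle\uuu\rangle$ is open, so by the very definition of $\Tu$ as the finest such topology, $\sigma\leq\Tu$; consequently $\langle\uuu\rangle$ is $\Tu$-open as well. The dually closed and dually embedded properties then follow at once from the cited Lemma~3 of \cite{Nob}. As a by-product I also obtain $\Tu|_{\langle\uuu\rangle}=\Tu(\langle\uuu\rangle)$: the $\sigma$ just produced gives $\Tu(\langle\uuu\rangle)=\sigma|_{\langle\uuu\rangle}\leq\Tu|_{\langle\uuu\rangle}$, while the reverse inequality holds because $\Tu|_{\langle\uuu\rangle}$ is itself a Hausdorff group topology on $\langle\uuu\rangle$ in which $u_n\to 0$.

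For the maximal extension claim, suppose $\rho$ is any group topology on $G$ in $\mathbf{TAG}$ with $\rho|_{\langle\uuu\rangle}=\Tu(\langle\uuu\rangle)$; then $u_n\to 0$ in $\rho$. The supremum $\Tu\vee\rho$ is still a group topology, is finer than the Hausdorff topology $\Tu$ and therefore Hausdorff, and still satisfies $u_n\to 0$; by maximality of $\Tu$, $\Tu\vee\rho\leq\Tu$, so $\rho\leq\Tu$, as required. The only genuinely non-formal step in the whole argument is the single extension lemma---verifying that declaring $\langle\uuu\rangle$ open really produces a group topology on $G$ and that Hausdorffness, the convergence $u_n\to 0$, and the openness of $\langle\uuu\rangle$ all survive the extension. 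Once that is in place, everything else is a one-line invocation of the maximality defining $\Tu$.
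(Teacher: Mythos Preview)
Your proof is correct and follows essentially the same line as the paper's: both hinge on the translation-extension construction (declaring $\langle\uuu\rangle$ open with its $\Tu(\langle\uuu\rangle)$-topology) to get (3)$\Rightarrow$(1), the openness of $\langle\uuu\rangle$ in $\Tu$, and the equality $\Tu|_{\langle\uuu\rangle}=\Tu(\langle\uuu\rangle)$. The one small divergence is in the maximal-extension step: the paper simply identifies the translation topology $\tau$ with $\Tu$ (using that both restrict to $\Tu(\langle\uuu\rangle)$ on the open subgroup $\langle\uuu\rangle$) and then invokes the evident fact that $\tau$ is the finest group topology on $G$ extending $\Tu(\langle\uuu\rangle)$, whereas you bypass this identification via the supremum trick $\Tu\vee\rho\leq\Tu$---a clean alternative that avoids having to argue $\Tu\leq\tau$ separately.
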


\begin{proof}
Evidently, (1) implies (2) and  (2) implies (3). Let $\uuu$ be a $T$-sequence in  $\langle\uuu\rangle$. 
Let $\tau$ be the topology on $G$ whose base is all translates of $\Tu(\langle\uuu\rangle)$-open sets.  Clearly, $\uuu$ converges to zero in $\tau$. Thus $\uuu$ is a $T$-sequence in $G$. So (3) implies (1).

Let us prove the last assertion. By the definition of $\Tu$ we have also $\tau\leq\Tu$, and hence $\tau|_{\langle\uuu\rangle}= \Tu (\langle\uuu\rangle)\leq \Tu|_{\langle\uuu\rangle}$. Thus $\langle\uuu\rangle$ is open in $\Tu$, and hence it is dually closed and dually embedded in $(G, \Tu)$ by \cite[Lemma 3.3]{Nob}. On the other hand,  $\Tu|_{\langle\uuu\rangle} \leq \Tu (\langle\uuu\rangle)=\tau|_{\langle\uuu\rangle}$ by the definition of $\Tu (\langle\uuu\rangle)$. So $\Tu$ is an extension of $\Tu(\langle\uuu\rangle)$. Now clearly, $\tau =\Tu$ and $(G, \Tu)$  is the maximal extension of  $(\langle\uuu\rangle, \Tu(\langle\uuu\rangle)$ in the class $\mathbf{TAG}$.
\end{proof}

For  $TB$-sequences we have the following:
\begin{lemma} \label{l12}
For a sequence $\uuu$ in an Abelian group $G$ the following assertions are equivalent
\begin{enumerate}
\item[{\rm (1)}] $\uuu$ is a $TB$-sequence in $G$;
\item[{\rm (2)}] $\uuu$ is a $TB$-sequence in every subgroup of $G$ containing $\langle\uuu\rangle$;
\item[{\rm (3)}] $\uuu$ is a $TB$-sequence in  $\langle\uuu\rangle$.
\end{enumerate}
In this case, the subgroup $\langle\uuu\rangle$ is dually closed and dually embedded in $(G,\Tub)$, and $(G,\Tub)$  is the maximal extension  of $(\langle\uuu\rangle, \Tub(\langle\uuu\rangle))$ in the class of all  precompact Abelian groups.
\end{lemma}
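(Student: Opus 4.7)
My plan is to mirror the proof of Lemma \ref{l11}, exploiting the standard bijection between precompact Hausdorff group topologies on an Abelian group $A$ and point-separating subgroups $D\le\mathrm{Hom}(A,\TT)$: each such $D$ induces the initial topology making all its members continuous, and $D$ is recovered as the group of continuous characters of the resulting topology. Under this correspondence, $\Tub$ will be associated to the maximal subgroup $D^{*}:=\{\chi\in\mathrm{Hom}(G,\TT):\chi(u_n)\to 1\}$, and analogously $\Tub(\langle\uuu\rangle)$ to $D^{*}_{\langle\uuu\rangle}:=\{\phi\in\mathrm{Hom}(\langle\uuu\rangle,\TT):\phi(u_n)\to 1\}$.

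The implications $(1)\Rightarrow(2)$ and $(2)\Rightarrow(3)$ are immediate, since a subgroup of a precompact Hausdorff Abelian group inherits a precompact Hausdorff topology, and convergence $u_n\to 0$ passes to any subgroup containing $\langle\uuu\rangle$. The substantive work is in $(3)\Rightarrow(1)$, which I would settle constructively. Given a point-separating subgroup $D_0\le\mathrm{Hom}(\langle\uuu\rangle,\TT)$ witnessing that $\uuu$ is a $TB$-sequence in $\langle\uuu\rangle$, I would extend each $\phi\in D_0$ to a homomorphism $\tilde\phi\in\mathrm{Hom}(G,\TT)$ using divisibility of $\TT$, and collect these extensions into a subgroup $\widetilde{D_0}$. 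Adjoining $D_1:=\{\chi\in\mathrm{Hom}(G,\TT):\chi|_{\langle\uuu\rangle}=1\}$, which separates $G/\langle\uuu\rangle$ by divisibility of $\TT$, I would form $D:=\widetilde{D_0}+D_1$. Two elements of $G$ in different classes mod $\langle\uuu\rangle$ are separated by $D_1$, and two distinct elements of $\langle\uuu\rangle$ are separated by $\widetilde{D_0}$ (whose restriction to $\langle\uuu\rangle$ recovers $D_0$); hence $D$ is point-separating. Since every $\chi\in D$ sends $u_n$ to $1$, the associated precompact Hausdorff topology on $G$ witnesses that $\uuu$ is a $TB$-sequence in $G$.

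For the secondary claims I would exploit the explicit description of $\Tub$ via $D^{*}$. Dual closedness of $\langle\uuu\rangle$ in $(G,\Tub)$ will follow because any character of $G/\langle\uuu\rangle$ lifts to an element of $\mathrm{Hom}(G,\TT)$ that vanishes on $\langle\uuu\rangle$ and hence automatically lies in $D^{*}$. Dual embeddedness will follow because any $\Tub(\langle\uuu\rangle)$-continuous character of $\langle\uuu\rangle$ belongs to $D^{*}_{\langle\uuu\rangle}$ and admits, by divisibility of $\TT$, an extension to $G$ that lies in $D^{*}$ and is therefore $\Tub$-continuous. For the maximal extension property, given a precompact Hausdorff topology $\sigma$ on $G$ with $\sigma|_{\langle\uuu\rangle}=\Tub(\langle\uuu\rangle)$, each $\sigma$-continuous character of $G$ restricts to a $\Tub(\langle\uuu\rangle)$-continuous character of $\langle\uuu\rangle$, so its values on $u_n$ converge to $1$; thus it lies in $D^{*}$ and is $\Tub$-continuous, forcing $\sigma\le\Tub$.

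The main obstacle I anticipate is not any single deep step but the careful bookkeeping with the correspondence between precompact topologies and subgroups of $\mathrm{Hom}(G,\TT)$, and especially the verification that restriction of a precompact Hausdorff topology to a subgroup coincides with the topology induced by the restricted character subgroup. Once this is secured, each part of the lemma reduces to a short manipulation with extensions of $\TT$-valued homomorphisms supplied by the divisibility of $\TT$.
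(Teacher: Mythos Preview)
Your argument is correct. One small slip: in the $(3)\Rightarrow(1)$ step you write ``every $\chi\in D$ sends $u_n$ to $1$''; for $\chi\in\widetilde{D_0}$ one only has $\chi(u_n)\to 1$, not $\chi(u_n)=1$. This is harmless, since convergence in the precompact topology induced by $D$ is precisely $\chi(u_n)\to 1$ for all $\chi\in D$, which is what you need.

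Your route differs from the paper's in two respects. For $(3)\Rightarrow(1)$ the paper does not work with character subgroups at all: it takes the topology $\tau$ on $G$ generated by translates of $\Tub(\langle\uuu\rangle)$-open sets (so $\langle\uuu\rangle$ is open in $(G,\tau)$), observes that $(G,\tau)^\wedge$ separates points (via the open-subgroup argument), and then passes to the Bohr modification $\tau^+$ to obtain a precompact Hausdorff topology in which $\uuu$ still converges. For the secondary claims (dually closed, dually embedded, maximal extension) the paper simply cites Proposition~1.8 and Lemma~3.6 of \cite{DGT}. Your approach, by contrast, works entirely through the Comfort--Ross correspondence between precompact Hausdorff topologies and point-separating subgroups of $\mathrm{Hom}(G,\TT)$, and derives all three secondary claims directly from the explicit descriptions $D^{*}$ and $D^{*}_{\langle\uuu\rangle}$ together with divisibility of $\TT$. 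The payoff of your approach is that it is fully self-contained and yields the equality $\Tub|_{\langle\uuu\rangle}=\Tub(\langle\uuu\rangle)$ (equivalently $D^{*}|_{\langle\uuu\rangle}=D^{*}_{\langle\uuu\rangle}$) as a by-product; the paper's approach is shorter on the page but defers the substantive content to \cite{DGT}.
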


\begin{proof}
Evidently, (1) implies (2) and  (2) implies (3). Let $\uuu$ be a $TB$-sequence in  $\langle\uuu\rangle$. 
Then $(\langle\uuu\rangle, \Tub(\langle\uuu\rangle))^\wedge$ separates the points of $\langle\uuu\rangle$. Let $\tau$ be the topology on $G$ whose base is all translates of $\Tub(\langle\uuu\rangle)$-open sets. Then $(\langle\uuu\rangle, \Tub(\langle\uuu\rangle))$ is an open subgroup of $(G,\tau)$. It is easy to see that $(G,\tau)^\wedge$ separates the points of $G$. Since $\uuu$ converges to zero in $\tau$, it is also converges to zero in $\tau^+$, where $\tau^+$ is the Bohr topology of $(G,\tau)$. Thus $\uuu$ is a $TB$-sequence in $G$. So (3) implies (1).

The last assertion follows from Proposition 1.8 and Lemma 3.6 in \cite{DGT}.
\end{proof}

For a sequence $\uuu =\{ u_n \}_{n\in \omega}$ of characters of a compact Abelian group $X$ set
\[
K_\mathbf{u} =\bigcap_{n\in \omega} \ker(u_n).
\]
The following  assertions is proved in \cite{DG}:
\begin{fact} {\rm \cite[Lemma 2.2(i)]{DG}} \label{fL}
For every sequence $\uuu =\{ u_n \}_{n\in \omega}$ of characters of a compact Abelian group $X$, the subgroup  $K_\mathbf{u}$ is a closed $G_\delta$-subgroup of $X$ and $K_\mathbf{u}=\langle\uuu\rangle^\perp$.
\end{fact}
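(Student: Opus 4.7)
The plan is to verify the two assertions of the statement separately, each by a short direct computation.

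For the closed $G_\delta$ claim, I would start from the observation that each $u_n \in \widehat{X}$ is, by definition, a continuous homomorphism $X \to \mathbb{T}$. Since $\{1\}$ is closed in the Hausdorff group $\mathbb{T}$, each $\ker(u_n) = u_n^{-1}(\{1\})$ is closed in $X$; and since $\mathbb{T}$ is metrizable one can write $\{1\} = \bigcap_{k \ge 1} V_k$ with $V_k$ an open neighbourhood of $1$, so that $\ker(u_n) = \bigcap_{k \ge 1} u_n^{-1}(V_k)$ is a $G_\delta$ in $X$. Each $\ker(u_n)$ is obviously a subgroup. Intersecting over the countable index set $\omega$ preserves closedness, preserves the $G_\delta$ property (countable intersection of $G_\delta$'s is $G_\delta$), and preserves the subgroup property, so $K_\uuu$ is a closed $G_\delta$-subgroup of $X$.

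For the identity $K_\uuu = \langle \uuu \rangle^\perp$, I would argue by double inclusion. Any $\chi \in \langle \uuu \rangle$ is a finite $\mathbb{Z}$-linear combination $\sum_i k_i u_{n_i}$, so if $x \in K_\uuu$ then $(\chi, x) = \prod_i (u_{n_i}, x)^{k_i} = 1$, giving $K_\uuu \subseteq \langle \uuu \rangle^\perp$. Conversely, each $u_n$ itself lies in $\langle \uuu \rangle$, so any $x \in \langle \uuu \rangle^\perp$ already satisfies $(u_n, x) = 1$ for every $n$, i.e.\ $x \in K_\uuu$.

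I do not anticipate any genuine obstacle here: the statement is really a bookkeeping consequence of (i) continuity of each $u_n$ into the first-countable Hausdorff group $\mathbb{T}$, which yields the closed $G_\delta$ structure of each $\ker(u_n)$, and (ii) bilinearity of the pairing $X \times \widehat{X} \to \mathbb{T}$, which yields the annihilator identity. The only point worth flagging is that the $G_\delta$ conclusion does rely on first-countability of $\mathbb{T}$: closed subgroups of a general compact Abelian group need not be $G_\delta$, so this aspect is not automatic from closedness alone.
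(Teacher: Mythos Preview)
Your argument is correct and is the natural direct verification. Note, however, that the paper does not supply its own proof of this statement: it is recorded as a \emph{Fact} and cited verbatim from \cite[Lemma 2.2(i)]{DG}. So there is no ``paper's proof'' to compare against here; your writeup simply fills in the omitted routine details, and it does so correctly. The one observation worth keeping is exactly the one you flagged: the $G_\delta$ conclusion genuinely uses that $\mathbb{T}$ is first-countable, since closedness alone would not suffice.
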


The next two lemmas are natural analogues of Lemmas 2.2(ii) and 2.6 of \cite{DG}.
\begin{lemma} \label{l13}
Let $X$ be a compact Abelian groups and $\mathbf{u} =\{ u_n \}_{n\in \omega}$ be  a $T$-sequence in  $\widehat{X}$. Then  $s_\uuu (X)/K_\uuu$ is a $T$-characterized subgroup of $X/K_\uuu$.
\end{lemma}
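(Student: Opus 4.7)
My plan is to use the sequence $\uuu$ itself, reinterpreted via Pontryagin duality as a sequence in $\widehat{X/K_\uuu}$, as the characterizing $T$-sequence.

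First, I would identify the dual group. By Pontryagin duality for quotients of compact Abelian groups, $\widehat{X/K_\uuu}$ is the subgroup of $\widehat{X}$ consisting of characters vanishing on $K_\uuu$, namely $K_\uuu^\perp$. Fact \ref{fL} gives $K_\uuu = \langle\uuu\rangle^\perp$, and since $\widehat{X}$ is discrete every subgroup is closed, so the bi-annihilator yields $K_\uuu^\perp = \langle\uuu\rangle^{\perp\perp} = \langle\uuu\rangle$. Hence $\widehat{X/K_\uuu}$ is canonically identified with $\langle\uuu\rangle$, and each $u_n$ descends to a character on $X/K_\uuu$ whose pairing with a coset satisfies $(u_n, x + K_\uuu) = (u_n, x)$ for all $x \in X$.

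Second, the sequence $\uuu$, viewed in $\widehat{X/K_\uuu} \cong \langle\uuu\rangle$, is a $T$-sequence by the implication $(1)\Rightarrow(3)$ of Lemma \ref{l11} applied to $\uuu$ as a $T$-sequence in $\widehat{X}$.

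Finally, I would verify that this sequence characterizes $s_\uuu(X)/K_\uuu$. The inclusion $K_\uuu \subseteq s_\uuu(X)$ is immediate because each $u_n$ vanishes on $K_\uuu$, so $s_\uuu(X)/K_\uuu$ is a well-defined subgroup of $X/K_\uuu$. For any $x \in X$, the identity $(u_n, x + K_\uuu) = (u_n, x)$ shows that $(u_n, x + K_\uuu) \to 1$ if and only if $(u_n, x) \to 1$, which gives $s_\uuu(X/K_\uuu) = s_\uuu(X)/K_\uuu$. No real obstacle arises; the lemma is a careful bookkeeping of Pontryagin duality together with the key inputs Lemma \ref{l11} and Fact \ref{fL}.
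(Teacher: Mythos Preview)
Your proof is correct and follows essentially the same route as the paper: identify $\widehat{X/K_\uuu}$ with $K_\uuu^\perp$ via the adjoint of the quotient map, observe that $\uuu$ lies in this subgroup and remains a $T$-sequence there by Lemma~\ref{l11}, and then verify the equality $s_{\widetilde{\uuu}}(X/K_\uuu)=s_\uuu(X)/K_\uuu$ directly from the pairing identity. The only cosmetic difference is that you go one step further and use the bi-annihilator to identify $K_\uuu^\perp$ with $\langle\uuu\rangle$ exactly, whereas the paper just uses $\uuu\subset K_\uuu^\perp$; either form suffices for invoking Lemma~\ref{l11}.
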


\begin{proof}
Set $H:=s_\uuu (X)$ and $K:= K_\mathbf{u}$.   Let  $q : X \to X/K$ be  the quotient map. Then the  adjoint homomorphism $q^\wedge$ is an isomorphism from $(X/K)^\wedge$ onto $K^\perp$ in $X^\wedge$. For every $n\in\omega$, define the character $\widetilde{u}_n$ of $X/K$ as follows: $(\widetilde{u}_n, q(x))=(u_n, x)$ ($\widetilde{u}_n$ is well-defined since $K \subseteq \ker(u_n)$). Then $\widetilde{\mathbf{u}} =\{ \widetilde{u}_n \}_{n\in \omega}$ is a sequence of characters of $X/K$ such that $q^\wedge ( \widetilde{u}_n)= u_n$. Since $\uuu \subset K^\perp$, $\uuu$ is a $T$-sequence in $K^\perp$ by Lemma \ref{l11}. Hence $\widetilde{\uuu}$ is a $T$-sequence in $(X/K)^\wedge$ because $q^\wedge$ is an isomorphism.

We claim that $H/K = s_{\widetilde{\mathbf{u}}} (X/K)$. Indeed, for
every $h+K\in H/K$, by definition, we have
$(\widetilde{u}_n , h+K)=(u_n, h)\to 1.$
Thus $H/K \subseteq s_{\widetilde{\mathbf{u}}} (X/K)$. If $x+K\in s_{\widetilde{\mathbf{u}}} (X/K)$, then
$(\widetilde{u}_n , x+K)=(u_n, x)\to 1.$ This yields $x\in H$. Thus $x+K\in H/K$.
\end{proof}

Let $\uuu =\{ u_n\}_{n\in\omega}$ be a $T$-sequence in an Abelian group $G$. For every natural number $m$ set $\uuu_m =\{ u_n\}_{n\geq m}$. Clearly, $\uuu_m$ is a $T$-sequence in $G$,  $\Tu =\tau_{\mathbf{u}_m}$ and $s_\uuu (X)=s_{\uuu_m} (X)$ for every natural number $m$.
\begin{lemma} \label{l14}
Let $K$ be a closed subgroup of a compact Abelian group $X$ and $q: X\to X/K$ be the quotient map.
Then  $\widetilde{H}$ is a $T$-characterized subgroup of $X/K$ if and only if  $q^{-1}(\widetilde{H})$ is a $T$-characterized subgroup of $X$.
\end{lemma}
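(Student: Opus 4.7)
The plan is to exploit the correspondence between characters of $X/K$ and elements of $K^\perp$ given by the adjoint homomorphism $q^\wedge : (X/K)^\wedge \to \widehat{X}$, which is a group isomorphism onto $K^\perp$.

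For the forward implication, I would suppose $\widetilde{H} = s_{\widetilde{\uuu}}(X/K)$ for some $T$-sequence $\widetilde{\uuu} = \{\widetilde{u}_n\}$ in $(X/K)^\wedge$ and set $u_n := q^\wedge(\widetilde{u}_n) \in K^\perp$. Since $q^\wedge$ is a group isomorphism onto $K^\perp$, the sequence $\uuu = \{u_n\}$ is a $T$-sequence in $K^\perp$, hence also in $\widehat{X}$ by Lemma \ref{l11}. The defining identity $(u_n, x) = (\widetilde{u}_n, q(x))$ then yields $s_\uuu(X) = q^{-1}(s_{\widetilde{\uuu}}(X/K)) = q^{-1}(\widetilde{H})$, as required.

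For the backward implication, I would suppose $q^{-1}(\widetilde{H}) = s_\uuu(X)$ for a $T$-sequence $\uuu = \{u_n\}$ in $\widehat{X}$. The main obstacle is that a priori the $u_n$ need not lie in $K^\perp$, so one cannot directly push them down to characters of $X/K$. However, since $K \subseteq q^{-1}(\widetilde{H}) = s_\uuu(X)$, the restrictions $u_n|_K$ converge pointwise to the trivial character of $K$. Applying dominated convergence to the Haar integrals $\int_K u_n|_K \, d\mu_K$ shows that these integrals converge to $1$; but by orthogonality of characters each such integral equals $1$ if $u_n|_K$ is trivial and $0$ otherwise, so there must exist $m$ with $u_n|_K = 1$ for all $n \geq m$, i.e.\ $u_n \in K^\perp$ for all $n \geq m$.

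To finish, pass to the tail $\uuu_m = \{u_n\}_{n \geq m}$, which is still a $T$-sequence and satisfies $s_{\uuu_m}(X) = s_\uuu(X) = q^{-1}(\widetilde{H})$. Since $\uuu_m \subseteq K^\perp$, define $\widetilde{u}_n \in (X/K)^\wedge$ by the relation $q^\wedge(\widetilde{u}_n) = u_n$ for $n \geq m$; transporting along the isomorphism $q^\wedge$ and invoking Lemma \ref{l11}, the sequence $\widetilde{\uuu}_m = \{\widetilde{u}_n\}_{n \geq m}$ is a $T$-sequence in $(X/K)^\wedge$. The same computation as in the forward direction gives $q^{-1}(s_{\widetilde{\uuu}_m}(X/K)) = s_{\uuu_m}(X) = q^{-1}(\widetilde{H})$, and surjectivity of $q$ then yields $s_{\widetilde{\uuu}_m}(X/K) = \widetilde{H}$, so $\widetilde{H}$ is $T$-characterized in $X/K$.
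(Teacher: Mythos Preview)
Your proof is correct. The forward implication is essentially identical to the paper's. For the converse, however, you take a genuinely different route. The paper cites Proposition~2.5 of \cite{DG} to obtain an index $m$ with $K\subseteq K_{\uuu_m}$, then invokes Lemma~\ref{l13} to see that $H/K_{\uuu}$ is $T$-characterized in $X/K_{\uuu}$, and finally applies the already-proved forward direction to the quotient $X/K\to X/K_{\uuu}$. You instead give a self-contained argument for the key step ``$u_n\in K^{\perp}$ eventually'': from $K\subseteq s_{\uuu}(X)$ you get pointwise convergence $u_n|_K\to 1$, dominated convergence forces $\int_K u_n|_K\,d\mu_K\to 1$, and orthogonality of characters makes each of these integrals equal to $0$ or $1$, hence eventually $1$. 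After that you transport the tail $\uuu_m$ directly through the isomorphism $q^{\wedge}$, using Lemma~\ref{l11} to keep the $T$-sequence property. Your approach avoids both the external citation and the detour through Lemma~\ref{l13}, at the cost of a short measure-theoretic computation; the paper's approach is more structural and reuses machinery it has already set up.
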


\begin{proof}
Let $\widetilde{H}$ be a $T$-characterized subgroup of $X/K$ and let a $T$-sequence $\widetilde{\mathbf{u}} =\{ \widetilde{u}_n \}_{n\in \omega}$ characterize $\widetilde{H}$. Set $H:=q^{-1}(\widetilde{H})$. We have to show that $H$ is a $T$-characterized subgroup of $X$.

Note that the adjoint homomorphism $q^\wedge$ is an isomorphism from $(X/K)^\wedge$ onto $K^\perp$ in $X^\wedge$. Set $\uuu =\{ u_n \}_{n\in\omega}$, where $u_n = q^\wedge (\widetilde{u}_n)$. Since $q^\wedge$ is injective, $\uuu$ is a $T$-sequence in $K^\perp$. By Lemma \ref{l11}, $\uuu$ is a $T$-sequence in $\widehat{X}$. So it is enough to show that $H=s_\uuu (X)$. This follows from the following
chain of equivalences. By definition, $x\in s_\uuu (X)$ if and only if
\[
(u_n, x)\to 1  \ \Leftrightarrow  \ (\widetilde{u}_n, q(x))\to 1\   \Leftrightarrow   \ q(x)\in \widetilde{H}= H/K \   \Leftrightarrow   \  x \in H.
\]
The last equivalence is due to the inclusion $K\subseteq H$.

Conversely, let $H:= q^{-1}(\widetilde{H})$ be a $T$-characterized subgroup of $X$ and a $T$-sequence  $\uuu =\{ u_n \}_{n\in\omega}$ characterize $H$.   Proposition 2.5 of \cite{DG} implies that we can find $m\in \NN$ such that $K\subseteq K_{\uuu_m}$. So, taking into account that $H=s_\uuu (X)=s_{\uuu_m} (X)$ for every natural number $m$, without loss of generality we can assume that $K\subseteq K_\uuu$. By Lemma \ref{l13}, $H/K_\uuu$ is a $T$-characterized subgroup of $X/K_\uuu$. Denote by $q_u$ the quotient homomorphism from $X/K$ onto $X/K_\uuu$. Then $\widetilde{H}=q_u^{-1} (H/K_\uuu)$ is $T$-characterized in $X/K$ by the previous paragraph of the proof.
\end{proof}

The next theorem is an analogue of Theorem B of \cite{DG}, and it reduces the study of $T$-characterized subgroups of compact Abelian groups to the study of $T$-characterized ones of compact Abelian metrizable groups:
\begin{theorem} \label{Char-T}
A subgroup $H$ of a compact Abelian group $X$ is $T$-characterized if and only if $H$ contains a closed $G_\delta$-subgroup $K$ of $X$ such that $H/K$ is a $T$-characterized subgroup of the compact metrizable group $X/K$.
\end{theorem}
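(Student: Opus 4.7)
The plan is to combine the three preparatory tools already in hand: Fact \ref{fL} produces a canonical closed $G_\delta$-subgroup $K_\uuu$ from any sequence $\uuu$ of characters, Lemma \ref{l13} tells us that $s_\uuu(X)/K_\uuu$ is $T$-characterized in the quotient, and Lemma \ref{l14} transports $T$-characterized subgroups back and forth along a quotient map $q\colon X\to X/K$. With these in place, both directions of Theorem \ref{Char-T} should essentially reduce to an invocation of the right lemma, so the main task is to verify that the metrizability condition on $X/K$ is automatic in the construction.

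For the \emph{if} direction I would argue as follows. Suppose $H$ contains a closed $G_\delta$-subgroup $K$ of $X$ such that $H/K$ is a $T$-characterized subgroup of $X/K$. Let $q\colon X\to X/K$ be the quotient map; since $K\subseteq H$, we have $q^{-1}(H/K)=H$. By Lemma \ref{l14}, the fact that $H/K$ is $T$-characterized in $X/K$ implies that $H=q^{-1}(H/K)$ is $T$-characterized in $X$. No additional work is needed here, and the metrizability of $X/K$ is not used for this implication.

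For the \emph{only if} direction, assume $H=s_\uuu(X)$ for some $T$-sequence $\uuu=\{u_n\}$ in $\widehat{X}$. I would set $K:=K_\uuu=\bigcap_{n\in\omega}\ker(u_n)$. By Fact \ref{fL}, $K$ is a closed $G_\delta$-subgroup of $X$, and the inclusion $K\subseteq H$ is immediate from the definition of $s_\uuu(X)$, since $(u_n,x)=1$ for all $n$ whenever $x\in K$. Because $K$ is a closed $G_\delta$-subgroup of the compact group $X$, the quotient $X/K$ is a compact group with a countable neighborhood base at zero, hence second countable and therefore metrizable. Finally, Lemma \ref{l13} applied to $\uuu$ gives that $s_\uuu(X)/K_\uuu = H/K$ is a $T$-characterized subgroup of $X/K$, which is exactly the required conclusion.

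There is no serious obstacle: the entire content of the theorem has been pre-packaged in the preceding lemmas, and the only genuine observation needed is the correct choice $K=K_\uuu$ together with the standard fact that a compact group modulo a closed $G_\delta$-subgroup is metrizable. The statement of Theorem \ref{Char-T} should therefore be read as the assertion that this canonical construction is, up to enlarging $K$ within $H$, the universal way to reduce a $T$-characterization problem on $X$ to one on a compact metrizable quotient.
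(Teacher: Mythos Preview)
Your proof is correct and follows essentially the same route as the paper: choose $K=K_\uuu$ and invoke Fact~\ref{fL} together with Lemma~\ref{l13} for the forward direction, and apply Lemma~\ref{l14} for the converse. The only difference is cosmetic---you spell out the inclusion $K_\uuu\subseteq H$ and the metrizability of $X/K$ in slightly more detail than the paper does.
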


\begin{proof}
Let $H$ be $T$-characterized in $X$  by a $T$-sequence $\mathbf{u} =\{ u_n \}_{n\in \omega}$ in $\widehat{X}$. Set $K:=
K_\mathbf{u}$.   Since $K$ is a closed $G_\delta$-subgroup of $X$ by Fact \ref{fL}, $X/K$ is
metrizable. By  Lemma \ref{l13}, $H/K$ is a $T$-characterized subgroup of $X/K$.

Conversely, let $H$ contain a closed $G_\delta$-subgroup $K$ of $X$ such that $H/K$ is a $T$-characterized
subgroup of the compact metrizable group $X/K$. Then $H$ is a $T$-characterized subgroup of $X$ by Lemma \ref{l14}.
\end{proof}

As it was noticed in \cite{Ga2} before Definition 2.33, for every $T$-sequence $\mathbf{u}$  in an infinite Abelian group  $G$  the subgroup $\langle\mathbf{u}\rangle$ is open in $(G, \Tu)$ (see also Lemma \ref{l11}), and hence, by Lemmas 1.4 and 2.2 of \cite{BCM}, the following sequences are exact:
\begin{equation} \label{1-1}
\begin{split}
& 0 \rightarrow (\langle\mathbf{u}\rangle, \Tu) \rightarrow (G, \Tu) \rightarrow G/\langle\mathbf{u}\rangle \rightarrow 0, \\
& 0 \rightarrow \left( G/\langle\mathbf{u}\rangle \right)^\wedge \rightarrow (G, \Tu)^\wedge \rightarrow (\langle\mathbf{u}\rangle, \Tu|_{\langle\mathbf{u}\rangle})^\wedge \rightarrow 0,
\end{split}
\end{equation}
where $\left( G/\langle\mathbf{u}\rangle \right)^\wedge \cong \langle\mathbf{u}\rangle^\perp$ is a compact subgroup of $(G, \Tu)^\wedge $ and $(\langle\mathbf{u}\rangle, \Tu)^\wedge \cong (G, \Tu)^\wedge /\langle\mathbf{u}\rangle^\perp$.

Let $\uuu =\{ u_n\}_{n\in\omega}$ be a $T$-sequence in an Abelian group $G$.
It is known  \cite{ZP2} that $\Tu$ is sequential, and hence $(G,\Tu)$ is a $k$-space.  So the natural homomorphism $\alpha :=\alpha_{(G,\Tu)} :(G,\Tu)\to (G,\Tu)^{\wedge\wedge}$ is continuous by \cite[5.12]{Aus}. Let us recall that $(G,\Tu)$  is MinAP if and only if $(G,\Tu)=\ker(\alpha)$.

To prove Theorem \ref{t1} we need the following:
\begin{fact}  {\em \cite{Ga}} \label{f3}
For  each $T$-sequence $\uuu$ in  a countably infinite Abelian group $G$ the group $(G,\Tu)^\wedge$ is Polish.
\end{fact}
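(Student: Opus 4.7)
The plan is to reduce, via the exact sequence (\ref{1-1}), to the case $G = \langle\uuu\rangle$, to identify $(G,\Tu)$ as a $k_\omega$-group equipped with a natural cofinal family of compact metrizable sets, and then to realize the dual as a closed subgroup of a countable product of Polish groups.

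\textit{Reduction.} The second row of (\ref{1-1}) yields a short exact sequence of topological groups
\[
0 \to \langle\uuu\rangle^\perp \to (G,\Tu)^\wedge \to (\langle\uuu\rangle, \Tu|_{\langle\uuu\rangle})^\wedge \to 0,
\]
in which $\langle\uuu\rangle^\perp \cong (G/\langle\uuu\rangle)^\wedge$ is compact. Since $G$ is countable and $\langle\uuu\rangle$ is $\Tu$-open by Lemma \ref{l11}, the quotient $G/\langle\uuu\rangle$ is countable and discrete, so $\langle\uuu\rangle^\perp$ is compact metrizable and hence Polish. Any Hausdorff topological group that is an extension of one Polish group by another is itself Polish (first countability, completeness and separability all pass to the extension), so it suffices to treat the case $G = \langle\uuu\rangle$.

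\textit{$k_\omega$-structure and the dual.} For each $n\in\omega$ the tail set $A_n := \{0\} \cup \{\pm u_k : k \geq n\}$ is $\Tu$-compact, being a $\Tu$-convergent sequence together with its limit, and
\[
C_n := \underbrace{A_0 + \cdots + A_0}_{n \text{ times}}
\]
is a $\Tu$-compact subset of $G$; moreover $C_n$ is metrizable as a countable compact Hausdorff space. The sets $C_n$ are increasing and $\bigcup_n C_n = \langle\uuu\rangle = G$. Using the Protasov--Zelenyuk description of $\Tu$ as the finest group topology in which $\uuu$ converges to zero, one verifies that $(C_n)$ is cofinal in the family of $\Tu$-compact subsets of $G$: any compact set not contained in any $C_n$ would fail to lie eventually inside the basic zero-neighborhoods built from iterated sums of tails of $\uuu$. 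Consequently $(G,\Tu)$ is a $k_\omega$-group, and the restriction map
\[
\Phi : (G,\Tu)^\wedge \longrightarrow \prod_{n\in\omega} C(C_n,\TT), \qquad \chi \mapsto (\chi|_{C_n})_{n\in\omega},
\]
is a topological group embedding onto a closed subgroup (the cofinality of $(C_n)$ gives that the compact-open topology on the left coincides with the subspace topology induced by $\Phi$, while the compatibility and homomorphism conditions defining the image are closed). Since each $C(C_n,\TT)$ is a Polish group and a countable product of Polish groups is Polish, $(G,\Tu)^\wedge$ is homeomorphic to a closed subgroup of a Polish group, and is itself Polish.

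The principal obstacle is the cofinality claim: while each $C_n$ is manifestly compact and the $C_n$ exhaust $G$, checking that every $\Tu$-compact subset is contained in some $C_n$ requires the explicit combinatorial description of the finest group topology in which the given sequence converges to zero. Once this has been established, the reduction step and the embedding into $\prod_n C(C_n,\TT)$ are routine applications of Pontryagin duality for $k_\omega$-groups.
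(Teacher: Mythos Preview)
The present paper does not prove Fact~\ref{f3}; it is quoted from \cite{Ga}. Your argument is correct. The reduction to $G=\langle\uuu\rangle$ via~(\ref{1-1}) together with the three-space property for Polish groups is valid, and once $(\langle\uuu\rangle,\Tu)$ is known to carry the $k_\omega$-topology determined by $C_n=nA_0$, the closed embedding $\Phi$ into the Polish product $\prod_n C(C_n,\TT)$ finishes the proof. The cofinality of $(C_n)$ among $\Tu$-compact sets, which you rightly flag, is exactly what \cite{ZP2} supplies: $\Tu$ on $\langle\uuu\rangle$ is the inductive-limit topology of the $C_n$, and in any $k_\omega$-space every compact set lies in some stage (otherwise pick $x_n\in K\setminus C_n$; the set $\{x_n:n\in\omega\}$ meets each $C_m$ in a finite set, so all of its subsets are closed, producing an infinite closed discrete subset of the compact set $K$).

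The argument in \cite{Ga} is organized around a concrete metric rather than an abstract embedding: one identifies $(G,\Tu)^\wedge$ with the subgroup $s_\uuu(X)$ of the compact metrizable group $X=\widehat{G_d}$ and verifies directly that the metric $\rho$ of~(\ref{01}) is complete on it and induces the compact-open topology. That verification again rests on the hemicompactness of $(G,\Tu)$ furnished by \cite{ZP2}, so the two proofs share the same core. Yours is the cleaner structural packaging; the original has the advantage of producing an explicit Polish metric, which this paper subsequently exploits in the proofs of Lemmas~\ref{l1}--\ref{l3}.
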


Now we are in position to prove Theorem \ref{t1}.

{\em Proof of Theorem {\rm \ref{t1}}.} $(1)\Rightarrow (2)$
Let $H$ be a proper closed $T$-characterized subgroup of $X$ and a $T$-sequence $\uuu =\{ u_n\}_{n\in\omega}$ characterize $H$. Since $H$ is also characterized it is  a $G_\delta$-subgroup of $X$ by Fact \ref{f2}.  We have to show that $H^\perp$ admits a MinAP group topology.

Our idea of the proof is the following.  Set $G:= \widehat{X}$. By Fact \ref{f4}, $H^\perp$ is the von Neumann radical of $(G,\Tu)$. Now assume that  we found another $T$-sequence $\vvv$ which  characterizes   $H$ and  such that $\langle\vvv\rangle =H^\perp$ (maybe $\vvv =\uuu$).
By Fact  \ref{f4}, we have $\mathbf{n}(G,\Tv)=H^\perp =\langle\vvv\rangle$. Lemma \ref{l11} implies that the subgroup $(\langle\vvv\rangle, \Tv|_{\langle\vvv\rangle})$ of $(G,\Tv)$ is open, and hence it is dually closed and dually embedded in $(G,\Tv)$. Hence $\mathbf{n}(\langle\vvv\rangle, \Tv|_{\langle\vvv\rangle})=\mathbf{n}(G,\Tv)(=\langle\vvv\rangle)$ by Lemma 4 of \cite{Ga}. So $(\langle\vvv\rangle, \Tv|_{\langle\vvv\rangle})$  is MinAP. Thus $H^\perp =\langle\vvv\rangle$ admits a MinAP group topology, as desired.


We  find such a $T$-sequence $\vvv$ in 4 steps (in fact we show that $\vvv$ has the form $\uuu_m$ for some $m\in\mathbb{N}$).

{\it Step } 1. Let $q: X\to X/K_\uuu$ be the quotient map. For every $n\in\omega$,   define the character $\widetilde{u}_n$ of $X/K_\uuu$ by the equality $u_n =\widetilde{u}_n \circ q$ (this is possible since $K_\uuu \subseteq \ker(u_n)$). As it was shown in the proof of Lemma \ref{l13}, the sequence $\widetilde{\uuu} =\{ \widetilde{u}_n\}_{n\in\omega}$ is a $T$-sequence which characterizes  $H/K_\uuu$ in $X/K_\uuu$. Set $\widetilde{X}:=X/K_\uuu$ and $\widetilde{H} :=H/K_\uuu$. So that $\widetilde{H}=s_{\widetilde{\uuu}} (\widetilde{X})$. By \cite[5.34 and 24.11]{HR1} and since $K_\uuu \subseteq H$, we have
\begin{equation} \label{1-2}
H^\perp \cong (X/H)^\wedge \cong \left( \widetilde{X} / \widetilde{H} \right)^\wedge \cong \widetilde{H}^\perp .
\end{equation}
By Fact \ref{f2},  $\widetilde{X}$ is metrizable. Hence  $\widetilde{H}$ is also compact and metrizable, and $\widetilde{G}:= \widehat{\widetilde{X}}$ is a countable Abelian group by \cite[24.15]{HR1}. Since $H$ is a proper closed subgroup of $X$, (\ref{1-2}) implies that $\widetilde{G}$ is non-zero.

We claim that $\widetilde{G}$ is countably infinite. Indeed, suppose for a contradiction that $\widetilde{G}$ is finite. Then $X/K_\uuu =\widetilde{X}$ is also finite. Now  Fact \ref{fL} implies that $\langle\uuu \rangle$ is a finite subgroup of $G$. Since $\uuu$ is a $T$-sequence, $\uuu$ must be eventually equal to zero. Hence $H=s_\uuu (X)=X$ is not a proper subgroup of $X$, a contradiction.

{\it Step } 2. We claim that there is a natural number $m$ such that the group $(\langle\widetilde{\uuu}_m \rangle, \tau_{\widetilde{\mathbf{u}}}|_{\langle\widetilde{\uuu}_m \rangle})= (\langle\widetilde{\uuu}_m \rangle, \tau_{\widetilde{\mathbf{u}}_m}|_{\langle\widetilde{\uuu}_m \rangle})$ is MinAP.

Indeed, since $\widetilde{G}$ is countably infinite, we can apply Fact \ref{f4}. So $\widetilde{H}=(\widetilde{G}, \tau_{\widetilde{\mathbf{u}}})^\wedge$ algebraically. Since $\widetilde{H}$ and $(\widetilde{G}, \tau_{\widetilde{\mathbf{u}}})^\wedge$ are  Polish groups (see Fact \ref{f3}), $\widetilde{H}$ and $(\widetilde{G}, \tau_{\widetilde{\mathbf{u}}})^\wedge$ are topologically isomorphic by the uniqueness of the Polish group topology. Hence $(\widetilde{G}, \tau_{\widetilde{\mathbf{u}}})^{\wedge\wedge}=\widetilde{H}^\wedge$ is discrete. As it was noticed before the proof,  the natural homomorphism $\widetilde{\alpha}: (\widetilde{G}, \tau_{\widetilde{\mathbf{u}}}) \to (\widetilde{G}, \tau_{\widetilde{\mathbf{u}}})^{\wedge\wedge}$ is continuous. Since $(\widetilde{G}, \tau_{\widetilde{\mathbf{u}}})^{\wedge\wedge}$ is discrete we obtain that the von Neumann radical $\ker(\widetilde{\alpha})$ of $(\widetilde{G}, \tau_{\widetilde{\mathbf{u}}})$ is open in $\tau_{\widetilde{\mathbf{u}}}$. So there exists a natural number $m$ such that $\widetilde{u}_n \in \ker(\widetilde{\alpha})$ for every $n\geq m$.   Hence $\langle\widetilde{\uuu}_m \rangle \subseteq\ker(\widetilde{\alpha})$.
Lemma \ref{l11} implies that the subgroup $\langle\widetilde{\uuu}_m \rangle $ is open in $(\widetilde{G},\tau_{\widetilde{\mathbf{u}}})$, and hence it is dually closed and dually embedded in $(\widetilde{G}, \tau_{\widetilde{\mathbf{u}}})$. Now Lemma 4 of \cite{Ga} yields $\langle\widetilde{\uuu}_m \rangle =\ker(\widetilde{\alpha})$ and $(\langle\widetilde{\uuu}_m \rangle, \tau_{\widetilde{\mathbf{u}}}|_{\langle\widetilde{\uuu}_m \rangle})$ is MinAP.


{\it Step } 3. Set $\vvv =\{ v_n\}_{n\in\omega}$, where $v_n =u_{n+m}$ for every $n\in\omega$. Clearly, $\vvv$ is a $T$-sequence in $G$ characterizing $H$, $\Tu =\Tv$ and $K_\uuu \subseteq K_\vvv$. Let $t: X\to X/K_\vvv$ and $r: X/K_\uuu \to X/K_\vvv$ be the quotient maps. Analogously to Step 1 and the proof of Lemma \ref{l13},  the sequence $\widetilde{\vvv} =\{ \widetilde{v}_n\}_{n\in\omega}$ is a $T$-sequence in $\widehat{X/K_\vvv}$ which characterizes  $H/K_\vvv$ in $X/K_\vvv$, where $v_n =\widetilde{v}_n \circ t$. Since $t=r\circ q$ we have
\[
v_n =\widetilde{v}_n \circ t = t^\wedge (\widetilde{v}_n)= q^\wedge \left( r^\wedge (\widetilde{v}_n)\right),
\]
where $t^\wedge$, $r^\wedge$ and $q^\wedge$ are the adjoint homomorphisms to $t$, $r$ and $q$ respectively.

Since $q^\wedge$ and $r^\wedge$ are embeddings, we have $r^\wedge (\widetilde{v}_n) =\widetilde{u}_{n+m}$. In particular, $\langle \vvv\rangle \cong\langle \widetilde{\vvv}\rangle \cong \langle \widetilde{\uuu}_m \rangle \mbox{ and }$
\[
(\langle\widetilde{\uuu}_m \rangle, \tau_{\widetilde{\mathbf{u}}}|_{\langle\widetilde{\uuu}_m \rangle}) = (\langle\widetilde{\uuu}_m \rangle, \tau_{\widetilde{\mathbf{u}}_m}|_{\langle\widetilde{\uuu}_m \rangle})\cong (\langle\widetilde{\vvv} \rangle, \tau_{\widetilde{\mathbf{v}}}|_{\langle\widetilde{\mathbf{v}}\rangle}) \cong (\langle \vvv\rangle, \Tv|_{\langle \vvv\rangle}).
\]
By Step 2 $(\langle\widetilde{\uuu}_m \rangle, \tau_{\widetilde{\mathbf{u}}_m}|_{\langle\widetilde{\uuu}_m \rangle})$ is MinAP. Hence
$(\langle \vvv\rangle, \Tv|_{\langle \vvv\rangle})$ is MinAP as well.

{\it Step} 4. By the second exact sequence in (\ref{1-1}) applying to $\vvv$,   Fact \ref{f4} and since $(\langle\vvv \rangle, \tau_{\mathbf{v}}|_{\langle \vvv\rangle})$ is MinAP (by Step 3), we have $H=s_\vvv (X) = (G, \Tv)^\wedge = \left( G/\langle\mathbf{v}\rangle \right)^\wedge =\langle\mathbf{v}\rangle^\perp$ algebraically.  Thus $H^\perp =\langle\mathbf{v}\rangle $, and hence $H^\perp$ admits a  MinAP group topology generated by the $T$-sequence $\vvv$.

$(2)\Rightarrow (1)$: Since $H$ is a $G_\delta$-subgroup of $X$, $H$ is closed by \cite[Proposition 2.4]{DG} and $X/H$ is metrizable (due to the well known fact that a compact group of countable pseudocharacter is metrizable). Hence $H^\perp = (X/H)^\wedge$ is countable. Since $H^\perp$ admits a MinAP group topology, $H^\perp$ must be countably infinite. By Theorem 3.8  of \cite{Ga4}, $H^\perp$ admits a MinAP group topology generated by a $T$-sequence $\widetilde{\uuu} =\{ \widetilde{u}_n\}_{n\in\omega}$. By Fact \ref{f4}, this means that $s_{\widetilde{\uuu}} (X/H)= \{ 0\}$. Let $q: X\to X/H$ be the quotient map. Set $u_n =\widetilde{u}_n \circ q = q^\wedge  (\widetilde{u}_n)$. Since $q^\wedge$ is injective, $\uuu$ is a $T$-sequence in $\widehat{X}$ by Lemma \ref{l11}. We have to show that $H=s_\uuu (X)$. By definition,  $x\in s_\uuu (X)$ if and only if
\[
(u_n, x)= (\widetilde{u}_n, q(x)) \to 1 \Leftrightarrow q(x)\in s_{\widetilde{\uuu}} (X/H) \Leftrightarrow
q(x)=0 \Leftrightarrow  x\in H.
\]

(2)$\Leftrightarrow$(3) follows from  Theorem 3.8  of \cite{Ga4}. The theorem is proved.
$\Box$

{\em Proof of Theorem {\rm \ref{t2}}}.
$(1)\Rightarrow (2)$: Suppose for a contradiction that $X$ is not connected. Then, by \cite[24.25]{HR1}, the dual group $G=X^\wedge$ has a non-zero element $g$ of finite order. Then the subgroup $H:= \langle g\rangle^\perp$ of $X$ has finite index. Hence $H$ is an open subgroup of $X$. Thus $H$ is not $T$-characterized by Corollary \ref{c2}. This contradiction shows that $X$ must be connected.

$(2)\Rightarrow (1)$: Let $H$ be a proper $G_\delta$-subgroup of $X$. Then $H$ is closed by \cite[Proposition 2.4]{DG}, and $X/H$ is connected and non-zero. Hence $H^\perp \cong (X/H)^\wedge$ is countably  infinite and torsion free by \cite[24.25]{HR1}. Thus $H^\perp$   has infinite exponent. Therefore, by Theorem \ref{t1}, $H$ is $T$-characterized.
$\Box$

The next proposition is a simple corollary of Theorem B in \cite{DG}.
\begin{proposition}
The closure ${\bar H}$ of a characterized (in particular, $T$-characterized) subgroup $H$ of a compact Abelian group $X$ is a characterized subgroup of $X$.
\end{proposition}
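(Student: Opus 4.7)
The plan is to apply Theorem B of \cite{DG}, which is the ``characterized-subgroup" analogue of Theorem \ref{Char-T} here: namely, a subgroup $S$ of a compact Abelian group $X$ is characterized if and only if it contains a closed $G_\delta$-subgroup $K$ of $X$ such that $S/K$ is a characterized subgroup of the compact metrizable quotient $X/K$. Since Theorem B of \cite{DG} goes in both directions, the strategy is to use the ``only if" direction on $H$ to extract a good $K$, then check that the hypotheses of the ``if" direction hold for $\bar H$ with the same $K$.

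First, applying Theorem B of \cite{DG} to $H$, choose a closed $G_\delta$-subgroup $K\subseteq H$ of $X$ such that $H/K$ is characterized in the compact metrizable group $X/K$. Let $q:X\to X/K$ be the quotient map. Because $K\subseteq H\subseteq\bar H$, the closed subgroup $\bar H$ contains $K$, and since $q$ is a continuous surjection of compact Hausdorff spaces we have
\[
q(\bar H)=\overline{q(H)},\qquad \text{i.e.,}\qquad \bar H/K=\overline{H/K}\ \text{in}\ X/K.
\]
Thus $\bar H/K$ is a closed subgroup of the compact metrizable group $X/K$; in a metrizable space every closed set is a $G_\delta$, so $\bar H/K$ is a closed $G_\delta$-subgroup of $X/K$. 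By Fact \ref{f2}, $\bar H/K$ is a characterized subgroup of $X/K$.

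Now applying Theorem B of \cite{DG} in the reverse direction to $\bar H$: we have exhibited a closed $G_\delta$-subgroup $K$ of $X$ contained in $\bar H$ such that $\bar H/K$ is characterized in the compact metrizable group $X/K$. Hence $\bar H$ is a characterized subgroup of $X$, as desired. There is no real obstacle here; the only thing to verify is the identification $\bar H/K=\overline{H/K}$, which is immediate from the compactness of $X$ and continuity of $q$. The ``in particular" clause for $T$-characterized subgroups is automatic, since by definition $\mathrm{Char}_T(X)\subseteq\mathrm{Char}(X)$.
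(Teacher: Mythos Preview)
Your proof is correct and follows essentially the same route as the paper: both extract a closed $G_\delta$-subgroup $K\subseteq H$ from Theorem~B of \cite{DG}, observe that $\bar H\supseteq K$ with $\bar H/K$ closed (hence $G_\delta$, hence characterized) in the metrizable quotient $X/K$, and then lift back. The paper's version is simply terser, asserting directly that $\bar H$ is a $G_\delta$-subgroup of $X$ and invoking Theorem~B again, while you spell out the quotient argument and cite Fact~\ref{f2} explicitly; the underlying mathematics is the same.
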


\begin{proof}
By Theorem B of \cite{DG}, $H$ contains a compact $G_\delta$-subgroup $K$ of $X$. Then ${\bar H}$ is also a $G_\delta$-subgroup of $X$. Thus ${\bar H}$ is a characterized subgroup of $X$ by Theorem B of \cite{DG}.
\end{proof}
In general we cannot assert that the closure ${\bar H}$ of a  $T$-characterized subgroup $H$ of a compact Abelian group $X$ is also  $T$-characterized as the next example shows.
\begin{example} \label{exa1}
{\rm
Let $X =\ZZ(2)\times \TT$ and $G=\widehat{X}=\ZZ(2) \times \ZZ$ . It is known (see the end of $\mathbf{(1)}$ in \cite{Ga5}) that there is a $T$-sequence $\uuu$ in $G$ such that the von Neumann radical $\mathbf{n}(G,\Tu)$ of $(G,\Tu)$ is $\ZZ(2)\times\{ 0\}$, the subgroup $H:= s_\uuu (X)$ is countable and $\bar{H}=\{ 0\}\times \TT$. So the closure  $\bar{H}$ of the  countable $T$-characterized subgroup $H$ of $X$ is open. Thus $\bar{H}$ is not $T$-characterized by Corollary \ref{c2}.}
\end{example}

We do not know  answers to the following questions:
\begin{problem}
Let $H$ be a characterized subgroup of a compact Abelian group $X$ such that its closure $\bar{H}$ is $T$-characterized. Is  $H$ a $T$-characterized subgroup of $X$?
\end{problem}

\begin{problem}
Does there exists a metrizable Abelian compact group which has a countable non-$T$-characterized subgroup?
\end{problem}

\section{The Proofs of Theorems \ref{t3} and \ref{tS}} \label{s2}

Recall that a Borel subgroup $H$ of a Polish group $X$ is called {\it polishable} if there exists a Polish group topology $\tau$ on $H$ such that the inclusion map $i : (H, \tau ) \to X$ is continuous. Let $H$ be a $T$-characterized subgroup of a compact metrizable Abelian group $X$ by a $T$-sequence $\uuu =\{ u_n\}_{n\in\omega}$. Then, by  \cite[Theorem 1]{Ga}, $H$ is polishable by the  metric
\begin{equation} \label{01}
\rho (x,y) = d(x,y) + \sup \{ |(u_n, x) -(u_n, y)|, \; n\in\omega \},
\end{equation}
where $d$ is the initial metric on $X$.  Clearly, the topology generated by the metric $\rho$ on $H$ is finer than the induced one from $X$.

To prove Theorem \ref{t3} we need the following three lemmas.

For a real number $x$ we write $[x]$ for the integral part of  $x$  and $\| x\|$ for the distance from $x$ to the nearest integer. We also use the following inequality proved in \cite{Gab}
\begin{equation} \label{02}
\pi |\varphi | \leq | 1- e^{2\pi i\varphi} |\leq  2\pi |\varphi | , \quad \varphi\in \left[-\frac{1}{2} , \frac{1}{2} \right).
\end{equation}

\begin{lemma} \label{l1}
Let $\{ a_n\}_{n\in\omega} \subset \mathbb{N}$ be such that $a_n \to\infty$ and $a_n\geq 2, n\in\omega$. Set $u_n = \prod_{k\leq n} a_n$ for every $n\in\omega$. Then $\uuu =\{ u_n\}_{n\in\omega}$ is a $T$-sequence in $X=\mathbb{T}$, and the $T$-characterized subgroup $H=s_\uuu (\TT)$ of $\TT$ is a dense non-$F_\sigma$-subset of $\TT$.
\end{lemma}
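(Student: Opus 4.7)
The lemma asserts three things: that $\uuu$ is a $T$-sequence in $\widehat{\TT}=\ZZ$, that the subgroup $H=s_\uuu(\TT)$ is dense in $\TT$, and that $H$ fails to be an $F_\sigma$-subset of $\TT$. The plan is to handle these in order.

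The first two will be quick. For the $T$-sequence claim, I will note that $u_n\mid u_{n+1}$ and $u_n\to\infty$ (since $a_n\ge 2$), so the subgroups $\{u_N\ZZ\}_N$ form a descending chain with trivial intersection, providing a base at $0$ of a Hausdorff group topology on $\ZZ$ in which $u_n\to 0$ (as $u_n\in u_N\ZZ$ for all $n\ge N$). For density, each $k/u_N\in\TT$ lies in $H$ because $u_n(k/u_N)\in\ZZ$ for $n\ge N$, and the rationals $\{k/u_N:k\in\ZZ,\ N\in\omega\}$ are dense in $\TT$.

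The heart of the proof is the non-$F_\sigma$ assertion, which I will attack in three steps. First, I will show $H\ne\TT$ via the mixed-radix expansion $x=\sum c_k/u_k$ with $0\le c_k<a_k$: a direct computation gives $u_nx\equiv\sum_{k>n}c_k/(a_{n+1}\cdots a_k)\pmod 1$, and then $\|u_nx\|\to 0$ is equivalent to $c_{n+1}/a_{n+1}$ clustering only at $\{0,1\}$; for Lebesgue-almost every $x$ the digits $c_k$ are independent uniform on $\{0,\dots,a_k-1\}$, which rules this out, whence $\mu(H)=0$. Second, $(H,\rho)$ is a Polish group by \cite[Theorem~1]{Ga} via (\ref{01}), and I will verify that $\rho$ is \emph{strictly} finer than the subspace topology by exhibiting $x_N=\lfloor a_N/2\rfloor/u_N$: it lies in $H$ (since $u_nx_N\in\ZZ$ for $n\ge N$) and tends to $0$ in $\TT$, yet $u_{N-1}x_N\equiv\lfloor a_N/2\rfloor/a_N\pmod 1$, which via (\ref{02}) and $a_N\ge 2$ forces $\rho(x_N,0)$ to stay bounded below by a positive constant independent of $N$. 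Third, invoking the general principle (Solecki) that a polishable Borel subgroup of a Polish group is $F_\sigma$ exactly when its polishable topology is locally compact, I will rule out local compactness of $(H,\rho)$ using Fact~\ref{f4}: the countable non-discrete group $(\ZZ,\Tu)$ embeds continuously into $(H,\rho)^\wedge$ (continuity coming from equicontinuity of the family $\{u_n\}$ on $\rho$-compacta), and this is incompatible with $(H,\rho)$ being locally compact.

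The main obstacle will be this last step. The naive attempt---writing $H=\bigcup F_n$ with each $F_n$ closed in $\TT$ and hoping that each $F_n$ is automatically $\rho$-compact---fails precisely because $\rho$ is strictly finer than the subspace topology on $H$, so compactness in $\TT$ does not transfer to $\rho$. One must therefore appeal to a Solecki-type dichotomy linking $F_\sigma$-ness in the ambient group to local compactness of the polishable topology, and then separately exploit the lacunary growth $u_{n+1}/u_n=a_{n+1}\to\infty$ to rule out that $(H,\rho)$ is locally compact.
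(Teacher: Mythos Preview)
Your treatment of the first two claims and of steps 3(a)--(b) is fine, and you correctly reach the Baire step that the paper also uses: some $\TT$-compact $F_m$ contains a $\rho$-ball. The gap is in step~3(c). The ``Solecki principle'' you invoke---that a polishable Borel subgroup of a Polish group is $F_\sigma$ if and only if its Polish topology is locally compact---is simply false in the direction you need. A clean counterexample: $\ell^2$ inside $\RR^\omega$ (product topology). The inclusion $(\ell^2,\|\cdot\|_2)\hookrightarrow\RR^\omega$ is a continuous injective homomorphism between Polish groups, and $\ell^2=\bigcup_{M\in\NN}\{x:\sum x_n^2\le M\}$, where each such set is closed in $\RR^\omega$ and in fact compact (it is a closed subset of $\prod_n[-\sqrt M,\sqrt M]$). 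So $\ell^2$ is $K_\sigma$ in $\RR^\omega$, yet $(\ell^2,\|\cdot\|_2)$ is not locally compact. Thus ``$H$ is $F_\sigma$ in $\TT$'' does not force $(H,\rho)$ to be locally compact, and your reduction collapses. Your auxiliary argument that $(H,\rho)$ is not locally compact is also not convincing as stated: the canonical map $\alpha:(\ZZ,\Tu)\to(H,\rho)^\wedge$ is indeed a continuous injection (Fact~\ref{f4} gives $\ker\alpha=H^\perp=0$ since $H$ is dense), but a continuous injection of a countable non-discrete group into a locally compact group is no contradiction---witness $(\ZZ,\text{$p$-adic})\hookrightarrow\Delta_p$.

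The paper avoids this detour entirely. After the Baire step yields $B_\rho(0,\varepsilon)\subseteq F_m$, it works \emph{inside} that ball with the explicit mixed-radix expansion: choosing digits $c_n=\big[\tfrac{(a_n-1)\varepsilon}{20}\big]$ on a tail $n\ge l$ and truncating at level $k$ produces points $x_k$ for which one checks directly (via (\ref{02})) that $\rho(0,x_k)<\varepsilon$, hence $x_k\in F_m$. But $x_k\to x$ in $\TT$ with $x$ having those same digits on the whole tail, so $x\in F_m\subseteq H$; yet $c_n/a_n\to\varepsilon/20\ne 0$, contradicting the membership criterion~(\ref{2}). In short, the paper exploits the concrete digit description of $H$ to manufacture a $\TT$-limit point of the $\rho$-ball that escapes $H$, rather than routing through an abstract (and, as it turns out, unavailable) structural dichotomy.
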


\begin{proof}
We consider the circle group $\mathbb{T}$ as $\mathbb{R}/\mathbb{Z}$ and write it additively. So that $d(0,x)=\| x\|$ for every $x\in\mathbb{T}$. Recall (see, for example, the proof of Lemma 1 in \cite{Gab}) that every $x\in \TT$ has the unique representation in the form
\begin{equation} \label{1}
x= \sum_{n=0}^\infty \frac{c_n}{u_n},
\end{equation}
where $0\leq c_n < a_n$ and $c_n \not= a_n-1$ for infinitely many indices $n$.

It is known \cite{AaN} (see also (12) in the proof of Lemma 1 of \cite{Gab}) that $x$ with representation (\ref{1}) belongs to $H$ if and only if
\begin{equation} \label{2}
\lim_{n\to\infty} \frac{c_n}{a_n}(\mathrm{mod}\ 1) = 0.
\end{equation}
Hence $H$ is a dense subgroup of $\TT$. Thus $\uuu$  is even a $TB$-sequence in $\ZZ$ by Fact \ref{f1}.

We have to show that $H$ is not an $F_\sigma$-subset of $\TT$. Suppose for a contradiction that $H$ is   an $F_\sigma$-subset of $\TT$. Then $H=\cup_{n\in\NN} F_n$, where $F_n$ is a compact subset of $\TT$ for every $n\in \NN$. Since $H$ is a subgroup of $\TT$, without loss of generality we can assume that $F_n - F_n \subseteq F_{n+1}$. Since all  $ F_n$ are closed in $(H, \rho)$ as well,  the Baire theorem  implies that there are $0<\varepsilon <0.1$ and $m\in \NN$ such that
$F_m \supseteq \{ x: \rho(0,x) \leq \varepsilon\}$.

Fix arbitrarily $l>0$ such that $\frac{2}{u_{l-1}} <\frac{\varepsilon}{20}$. For every natural number $k>l$, set
\[
x_k := \sum_{n=l}^k \frac{1}{u_n} \cdot \left[ \frac{(a_n -1)\varepsilon}{20} \right].
\]
Then, for every $k>l$, we have
\[
 x_k = \sum_{n=l}^k \frac{1}{u_n} \cdot \left[ \frac{(a_n -1)\varepsilon}{20} \right]  < \sum_{n=l}^k \frac{1}{u_{n-1}} \cdot  \frac{\varepsilon}{20} < \frac{1}{u_{l-1}} \sum_{n=0}^{k-l} \frac{1}{2^n} < \frac{2}{u_{l-1}} <\frac{\varepsilon}{20}< \frac{1}{2}.
\]
This inequality and (\ref{02}) imply that
\begin{equation} \label{11}
d(0, x_k) = \| x_k\| = x_k  <\frac{\varepsilon}{20}, \; \mbox{ for every } k>l.
\end{equation}

For every  $s\in\omega$ and every natural number $k>l$, we estimate $| 1-(u_{s},x_k)|$ as follows.

{\it Case} 1.  Let $s<k$. Set $q=\max\{ s+1, l\}$. By the definition of $x_k$,  we have
\[
\begin{split}
2\pi  \left[ (u_s \cdot x_k)\right.  \left. (\mathrm{mod}\ 1) \right] & =  2\pi \left[ u_s \sum_{n=l}^k \frac{1}{u_n} \cdot \left[ \frac{(a_n -1)\varepsilon}{20} \right] (\mathrm{mod}\ 1)\right]  < 2\pi \sum_{n=q}^k \frac{u_s}{u_n} \cdot  \frac{(a_n -1)\varepsilon}{20} \\
& < \frac{\pi\varepsilon}{10}\left( 1+ \frac{1}{a_{s+1}} +\frac{1}{a_{s+1}a_{s+2}}+\frac{1}{a_{s+1}a_{s+2}a_{s+3}}+ \dots \right) \\
& < \frac{\pi\varepsilon}{10}\left( 1+ \frac{1}{2} + \frac{1}{2^2} +\frac{1}{2^3} +\dots \right)=\frac{\pi\varepsilon}{10}\cdot 2<  \frac{2\varepsilon}{3} <\frac{1}{2}.
\end{split}
\]
This inequality and (\ref{02}) imply
\begin{equation} \label{12}
|1- (u_s,  x_k)|= \left| 1-  \exp\left\{ 2\pi i\cdot\left[ (u_s \cdot x_k)\, (\mathrm{mod}\ 1) \right] \right\} \right| < \frac{2\varepsilon}{3}.
\end{equation}

{\it Case} 2. Let $s\geq k$. By the definition of $x_k$,  we have
\begin{equation} \label{13}
| 1-(u_{s},x_k)| =0.
\end{equation}
In particular, (\ref{13}) implies that $x_k \in H$ for every $k>l$.

Now,  for every $k>l$, (\ref{01}) and (\ref{11})-(\ref{13}) imply
\[
\rho(0,x_k) <  \frac{\varepsilon}{20} + \frac{2\varepsilon}{3}< \varepsilon.
\]
Thus $x_k \in F_m$ for every natural number $k> l$.
Clearly,
\[
x_k \to x:=\sum_{n=l}^\infty \frac{1}{u_n} \cdot \left[ \frac{(a_n -1)\varepsilon}{20} \right] \quad \mbox{ in } \TT.
\]
Since $F_m$ is a compact subset of $\TT$, we have  $x\in F_m$. Hence $x \in H$. On the other hand,  we have
\[
\lim_{n\to\infty} \frac{1}{a_n} \cdot \left[ \frac{(a_n -1)\varepsilon}{20} \right] (\mathrm{mod}\ 1) = \frac{\varepsilon}{20} \not= 0.
\]
So  (\ref{2}) implies that $x\not\in H$. This contradiction shows that $H=s_\uuu (\TT)$  is not an $F_\sigma$-subset of $\TT$.
\end{proof}

For a prime number $p$, the group $\mathbb{Z}(p^\infty)$ is regarded as the collection of fractions $m/p^n \in [0,1)$. Let $\Delta_p$ be the compact group of $p$-adic integers. It is well known that $\widehat{\Delta_p}=\mathbb{Z}(p^\infty)$.
\begin{lemma} \label{l2}
Let $X=\Delta_p$. For an increasing sequence of natural numbers $0< n_0<n_1<\dots $ such that $ n_{k+1} -n_k \to\infty$, set
\[
u_k =\frac{1}{p^{n_k +1}} \in \mathbb{Z}(p^\infty).
\]
Then the sequence $\uuu =\{ u_k\}_{k\in\omega}$ is a $T$-sequence in $\mathbb{Z}(p^\infty)$, and the $T$-characterized subgroup $H=s_\uuu (\Delta_p)$ is a dense non-$F_\sigma$-subset of $\Delta_p$.
\end{lemma}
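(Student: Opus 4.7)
The proof follows closely the template of Lemma \ref{l1}, adapted to the $p$-adic setting. The essential difference is that for $x = \sum_{n \geq 0} c_n p^n \in \Delta_p$ the pairing $(u_s, x) = \exp(2\pi i x/p^{n_s+1})$ depends only on the low digits $c_0, \dots, c_{n_s}$; accordingly, to exhibit a problematic limit point one should perturb the digit at position $n_k - m_0$ for a fixed shift $m_0$, rather than at positions above the scales $n_k$ as in Lemma \ref{l1}. As a first step, $\uuu$ is a $TB$-sequence because every $x \in \ZZ \subset \Delta_p$ satisfies $(u_k, x) = \exp(2\pi i x/p^{n_k+1}) \to 1$; hence $H \supseteq \ZZ$ is dense in $\Delta_p$, and Fact \ref{f1} yields that $\uuu$ is even a $TB$-sequence.

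For the non-$F_\sigma$-ness of $H$, the plan is to argue by contradiction. If $H = \bigcup_n F_n$ with each $F_n$ closed in $\Delta_p$ and, without loss of generality, $F_n \subseteq F_{n+1}$, then each $F_n$ is also closed in the Polish topology $(H, \rho)$ of \cite[Theorem 1]{Ga} (since $\rho$ is finer than the induced topology). Baire's theorem applied to $(H, \rho)$ together with translation invariance of $\rho$ then yield $m \in \NN$ and $0 < \varepsilon < 1$ with $F_m \supseteq \{ x \in H : \rho(0, x) \leq \varepsilon \}$.

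Next, I would fix $m_0 \in \NN$ so large that $8\pi p^{-(m_0+1)} < \varepsilon$, and then choose $L \in \NN$ large enough that $n_{k+1} - n_k \geq m_0 + 2$ for every $k \geq L - 1$ (possible since $n_{k+1} - n_k \to \infty$) and $p^{-(n_L - m_0)} < \varepsilon/2$. For each $K \geq L$, define
\[
y_K = \sum_{k=L}^{K} p^{n_k - m_0}, \qquad y = \sum_{k=L}^{\infty} p^{n_k - m_0} \in \Delta_p ,
\]
so $y_K \to y$ in $\Delta_p$ because the $p$-adic valuation of $y - y_K$ equals $n_{K+1} - m_0 \to \infty$.

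The core of the argument reduces to three claims. First, $y_K \in H$: for $s > K$ the fractional part $u_s y_K \pmod 1$ is a finite sum of negative powers of $p$, dominated by $p^{n_K - m_0 - n_s - 1} \to 0$. Second, $\rho(0, y_K) < \varepsilon$: directly, $d(0, y_K) = p^{-(n_L - m_0)} < \varepsilon/2$, while $|1 - (u_s, y_K)|$ is controlled by a three-case analysis in $s$. For $s \leq L - 1$, the gap condition makes each exponent $n_k - m_0 - n_s - 1$ a non-negative integer, so $(u_s, y_K) = 1$. For $L \leq s \leq K$, the terms with $k > s$ vanish mod $1$ by the same gap condition, while the terms with $k \leq s$ contribute a sum bounded via $n_s - n_{s-j} \geq j(m_0 + 2)$ by $p^{-(m_0+1)}\bigl(1 + \sum_{j \geq 1} p^{-j(m_0+2)}\bigr) < 2 p^{-(m_0+1)}$. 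For $s > K$ a parallel geometric estimate gives an even smaller bound. Applying (\ref{02}), $\sup_s |1 - (u_s, y_K)| \leq 4\pi p^{-(m_0+1)} < \varepsilon/2$. Third, $y \notin H$: the same computation yields $u_s y \pmod 1 \to p^{-(m_0+1)} \neq 0$ as $s \to \infty$, so $(u_s, y) \not\to 1$. Consequently $y_K \in F_m$ for all $K \geq L$ and, since $F_m$ is closed in $\Delta_p$, $y = \lim y_K \in F_m \subseteq H$, contradicting the third claim. The main obstacle is the synchronized choice of $m_0$ and $L$: $m_0$ must be large enough that the geometric series is controlled and (\ref{02}) applies, while $L$ must be large enough that the gap condition $n_{k+1} - n_k \geq m_0 + 2$ activates from $k = L - 1$ onward, simultaneously annihilating the integer contributions and enabling the geometric bound.
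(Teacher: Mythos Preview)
Your argument is essentially the paper's own proof: the same Baire contradiction, the same construction of approximants obtained by placing a single nonzero digit at position $n_k-m_0$ (the paper's $n_{l+i}-s$), and the same limit point outside $H$. Two small clean-ups: in the Baire step you need the stronger normalization $F_n - F_n \subseteq F_{n+1}$ (not merely $F_n\subseteq F_{n+1}$) so that translating the interior ball to the origin lands in some $F_m$; and the quantity $u_s y\ (\mathrm{mod}\ 1)$ need not converge to $p^{-(m_0+1)}$ exactly, but your own estimate already shows it lies in $[\,p^{-(m_0+1)},\,2p^{-(m_0+1)}\,]\subset(0,\tfrac12)$ for all large $s$, which is what you actually need for $y\notin H$.
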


\begin{proof}
Let $\omega=(a_n)_{n\in\omega} \in \Delta_p$, where $0\leq a_n <p$ for every  $n\in\omega$. Recall that, for every $k\in\omega$,  \cite[25.2]{HR1} implies
\begin{equation} \label{3}
(u_k,\omega) =\exp \left\{ \frac{2\pi i}{p^{n_k +1}} \left( a_0 + pa_1 +\dots + p^{n_k} a_{n_k} \right)\right\} .
\end{equation}
Further, by  \cite[10.4]{HR1}, if $\omega \not=0$, then
$d(0 ,\omega) = 2^{-n}$, where  $n$ is the minimal index such that $a_n \not= 0$.

Following \cite[2.2]{Ga1}, for every $\omega=(a_n) \in \Delta_p$ and every natural number $k>1$, set
\[
m_k =m_k (\omega) =\max \{ j_k, n_{k-1} \},
\]
where
\[
j_k  = n_k \mbox{ if } 0< a_{n_k} <p-1,
\]
and otherwise
\[
j_k =\min\{ j  :  \mbox{ either } a_s =0 \mbox{ for } j<s\leq n_k, \mbox{or } a_s =p-1 \mbox{ for  } j<s\leq n_k \}.
\]

In \cite[2.2]{Ga1} it is shown that
\begin{equation} \label{4}
\omega \in s_{\mathbf{u}} (\Delta_p ) \mbox{ if and only if } n_k -m_k \to\infty.
\end{equation}
So $H:=s_{\mathbf{u}} (\Delta_p )$ contains  the identity $\mathbf{1} =(1,0,0,\dots)$ of $\Delta_p$. By \cite[Remark 10.6]{HR1}, $\langle \mathbf{1} \rangle$ is dense in $\Delta_p$.
Hence $H$ is dense in $\Delta_p$ as well. Now Fact \ref{f1} implies that $\uuu$ is a $T$-sequence in $\mathbb{Z}(p^\infty)$.

We have to show that $H$ is not  an $F_\sigma$-subset of $\Delta_p$.
Suppose for a contradiction that $H=\cup_{n\in\NN} F_n$ is an $F_\sigma$-subset of $\Delta_p$, where $F_n$ is a compact subset of $\Delta_p$ for every $n\in \NN$. Since $H$ is a subgroup of $\Delta_p$, without loss of generality we can assume that $F_n - F_n \subseteq F_{n+1}$. Since all  $ F_n$ are closed in $(H, \rho)$ as well,  the Baire theorem implies that there are $0<\varepsilon <0.1$ and $m\in \NN$ such that $F_m \supseteq \{ x: \rho(0,x) \leq \varepsilon\}$.

Fix a natural number $s$ such that $\frac{1}{2^{s}} < \frac{\varepsilon}{20}$. Choose a natural number $l> s$ such that, for every natural number $w\geq l$, we have
\begin{equation} \label{41}
n_{w+1} -n_w >s.
\end{equation}

For every  $r\in\NN$, set
\[
\omega_r :=(a_n^r), \mbox{ where } a_n^r = \left\{
\begin{split}
1, & \mbox{ if } n= n_{l+i} -s \mbox{ for some } 1\leq i \leq r,\\
0, & \mbox{ otherwise}.
\end{split}
\right.
\]
Then, for every  $r\in\NN$,  (\ref{41}) implies that $\omega_r$ is well-defined  and
\begin{equation} \label{411}
d(0,\omega_r ) =\frac{1}{2^{n_{l+1}-s}} < \frac{1}{2^{n_l}} \leq \frac{1}{2^{l}} <\frac{1}{2^{s}} <\frac{\varepsilon}{20}.
\end{equation}

Note that
\begin{equation} \label{42}
1+p+ \dots + p^k = \frac{p^{k+1} -1}{p-1} < p^{k+1}.
\end{equation}

For every  $k\in\omega$ and every $r\in\NN$, we estimate $| 1-(u_{k},\omega_r)|$ as follows.

{\it Case} 1. Let $k\leq l$. By   (\ref{3}), (\ref{41}) and the definition of $\omega_r$ we have
\begin{equation} \label{43}
| 1-(u_{k},\omega_r)| =0.
\end{equation}

{\it Case} 2.
Let $l< k\leq l+r$. Then (\ref{42}) yields
\[
\frac{2\pi }{p^{n_k +1}} \left|  p^{n_{l+1} -s} +\dots + p^{n_{k} -s} \right| < \frac{2\pi }{p^{n_k +1}} \cdot p^{n_k -s+1} =\frac{2\pi }{p^{s}}\leq \frac{2\pi }{2^{s}} < \frac{\varepsilon}{2} < \frac{1}{2}.
\]
This inequality and the inequalities (\ref{02}) and  (\ref{3}) imply
\begin{equation} \label{44}
| 1-(u_{k},\omega_r)|  =\left| 1- \exp \left\{ \frac{2\pi i}{p^{n_k +1}} \left(  p^{n_{l+1} -s} +\dots + p^{n_{k} -s} \right)\right\} \right| < \frac{\varepsilon}{2}.
\end{equation}

{\it Case} 3. Let $l+r < k$. By (\ref{42}) we have
\[
\begin{split}
\frac{2\pi }{p^{n_k +1}} \left|  p^{n_{l+1} -s} +\dots + p^{n_{l+r} -s} \right| &
 < \frac{2\pi }{p^{n_k +1}} \cdot p^{n_{l+r} -s+1} \\
 & < \frac{2\pi }{p^{n_k +1}} \cdot p^{n_k -s+1} =\frac{2\pi }{p^{s}}\leq \frac{2\pi }{2^{s}} < \frac{\varepsilon}{2}.
 \end{split}
\]
These inequalities, (\ref{02}) and  (\ref{3}) immediately yield
\begin{equation} \label{45}
| 1-(u_{k},\omega_r)|  =\left| 1- \exp \left\{ \frac{2\pi i}{p^{n_k +1}} \left(  p^{n_{l+1} -s} +\dots + p^{n_{l+r} -s} \right)\right\} \right|  < \frac{\varepsilon}{2},
\end{equation}
and
 \begin{equation} \label{451}
| 1-(u_{k},\omega_r)|  < \frac{2\pi }{p^{n_k +1}} \cdot p^{n_{l+r} -s+1} \to 0,\; \mbox{ as } k\to\infty.
\end{equation}
So, (\ref{451}) implies that $\omega_r \in H$ for every $r\in\NN$.

For every $r\in\NN$, by (\ref{01}), (\ref{411}) and (\ref{43})-(\ref{45}) we have
\[
\rho( 0, \omega_r)  = d(0,\omega_r ) + \sup\left\{ \left| 1 - (u_k, \omega_r )\right|,\; k \in\omega \right\}
 < \frac{\varepsilon}{20} + \frac{\varepsilon}{2} < \varepsilon .
\]
Thus $\omega_r \in F_m$ for every  $r\in\NN$. Evidently,
\[
\omega_r \to \widetilde{\omega}=(\widetilde{a}_n) \mbox{ in } \Delta_p, \mbox{ where }  \widetilde{a}_n = \left\{
\begin{split}
1, & \mbox{ if } n= n_{l+i} -s \mbox{ for some  } i\in\NN ,\\
0, & \mbox{ otherwise}.
\end{split}
\right.
\]
Since $F_m$ is a compact subset of $\Delta_p$, we have  $\widetilde{\omega}\in F_m$. Hence  $\widetilde{\omega}\in H$. On the other hand, it is clearly that $m_k(\widetilde{\omega})= n_k - s$ for every $k\geq l+1$. Thus for every $k\geq l+1$,  $n_k -m_k(\widetilde{\omega}) =s \not\to \infty$. Now (\ref{4}) implies that $\widetilde{\omega} \not\in H$. This contradiction shows that
$H$ is not  an $F_\sigma$-subset of $\Delta_p$.
\end{proof}

\begin{lemma} \label{l3}
Let  $X=\prod_{n\in\omega} \mathbb{Z} (b_n)$, where $1<b_0 < b_1 < \dots, $ and $G:=\widehat{X}=\bigoplus_{n\in\omega} \mathbb{Z} (b_n)$. Set $\mathbf{u}=\{ u_n \}_{n\in\omega}$, where $u_n =1 \in \mathbb{Z} (b_n)^\wedge \subset G$ for every $n\in\omega$.
Then $\uuu $ is a $T$-sequence in $G$, and the $T$-characterized subgroup $H=s_\uuu (X)$ is a dense non-$F_\sigma$-subset of $X$.
\end{lemma}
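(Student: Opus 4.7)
The plan is to run exactly the same Baire category argument as in Lemmas~\ref{l1} and~\ref{l2}. Identify $\mathbb{Z}(b_n)$ with $\{0,1,\dots,b_n-1\}$; then $(u_n,x)=\exp(2\pi i x_n/b_n)$ for $x=(x_n)\in X$. The finitely supported elements of $X$ all lie in $s_\uuu(X)$ and form a dense subgroup of the product, so $H$ is dense in $X$; by Fact~\ref{f1}, $\uuu$ is a $TB$-sequence, hence a $T$-sequence. Since $b_n\to\infty$ (as the $b_n$ are strictly increasing positive integers), membership in $H$ is characterized by the condition that $x_n/b_n\to 0$ in $\TT$.

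Suppose for contradiction that $H=\bigcup_{n\in\NN} F_n$ is an $F_\sigma$-subset of $X$, with each $F_n$ compact; as in the previous lemmas, we may assume $F_n-F_n\subseteq F_{n+1}$. Each $F_n$ is also closed in the finer Polish topology on $H$ generated by the metric $\rho$ of (\ref{01}), so by the Baire theorem there exist $0<\varepsilon<1/10$ and $m\in\NN$ with $F_m\supseteq\{x\in H:\rho(0,x)\leq\varepsilon\}$.

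The heart of the plan is to produce witnesses $x_k\in F_m$ converging in $X$ to a point outside $H$. Fix $\delta\in(0,\varepsilon/(20\pi))$ and choose $l$ large enough that $d(0,y)<\varepsilon/2$ for every $y\in X$ supported on $\{n:n\geq l\}$ (possible because $d$ is a product metric with summable weights). Define $x_k\in X$ by $x_k^{(n)}=[\delta b_n]$ for $l\leq n\leq l+k$ and $x_k^{(n)}=0$ otherwise. Then $x_k$ has finite support so $x_k\in H$; by (\ref{02}) we have $|1-(u_n,x_k)|\leq 2\pi[\delta b_n]/b_n\leq 2\pi\delta$ for $l\leq n\leq l+k$, and $|1-(u_n,x_k)|=0$ elsewhere, whence $\rho(0,x_k)<\varepsilon/2+2\pi\delta<\varepsilon$ and $x_k\in F_m$. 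As $k\to\infty$, $x_k$ converges in the compact group $X$ to $\widetilde{x}$ with $\widetilde{x}^{(n)}=[\delta b_n]$ for $n\geq l$ and $\widetilde{x}^{(n)}=0$ for $n<l$. Since $F_m$ is closed in $X$, $\widetilde{x}\in F_m\subseteq H$, but $[\delta b_n]/b_n\to\delta\neq 0$ as $n\to\infty$ forces $(u_n,\widetilde{x})\to e^{2\pi i\delta}\neq 1$, so $\widetilde{x}\notin H$, a contradiction.

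The main obstacle, as in the previous two lemmas, is the simultaneous calibration of $\delta$ and $l$: we need $\delta$ small enough to keep $\sup_n|1-(u_n,x_k)|<\varepsilon/2$ for all $k$, yet bounded away from $0$ so that the coordinates $[\delta b_n]/b_n$ of the limit point $\widetilde{x}$ do not tend to $0$. The condition $b_n\to\infty$ is exactly what makes $[\delta b_n]/b_n\to\delta$ and allows both constraints to coexist; with this the arithmetic is routine and mirrors the estimates used for Lemmas~\ref{l1} and~\ref{l2}.
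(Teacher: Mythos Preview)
Your proof is correct and follows essentially the same route as the paper's: both use the Baire category argument on the Polish group $(H,\rho)$, construct finitely supported witnesses with coordinates $[\delta b_n]$ (the paper takes $\delta=\varepsilon/20$), and pass to the limit to get an element of $F_m\setminus H$. The only differences are cosmetic---your choice of constants and your phrasing of the product metric $d$ versus the paper's explicit $d(0,\omega)=2^{-l}$---so nothing further needs to be said.
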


\begin{proof}
Set $H:= s_\mathbf{u} (X)$. In \cite[2.3]{Ga1} it is shown that
\begin{equation} \label{5}
\omega=(a_n) \in s_\mathbf{u} (X) \mbox{ if and only if } \left\| \frac{a_n}{b_n} \right\| \to 0.
\end{equation}
So $\bigoplus_{n\in\omega} \mathbb{Z} (b_n) \subseteq H$. Thus $H$ is dense in $X$. Now Fact \ref{f1} implies that $\uuu$ is a $T$-sequence in $G$.

We have to show that $H$ is not  an $F_\sigma$-subset of $X$.
Suppose for a contradiction that $H=\cup_{n\in\NN} F_n$ is an $F_\sigma$-subset of $X$, where $F_n$ is a compact subset of $X$ for every $n\in \NN$.  Since $H$ is a subgroup of $X$, without loss of generality we can assume that $F_n - F_n \subseteq F_{n+1}$. Since all  $ F_n$ are closed in $(H, \rho)$ as well, the Baire theorem yields  that there are $0<\varepsilon <0.1$ and $m\in \NN$ such that
$F_m \supseteq \{ \omega\in X: \rho(0,\omega) \leq \varepsilon\}$.

Note 
that $d(0,\omega)=2^{-l}$, where $0\not= \omega=(a_n)_{n\in\omega} \in X$ and $l$ is the minimal index such that $a_l \not= 0$. Choose $l$ such that $2^{-l} < \varepsilon/3$. For every natural number  $k>l$, set
\[
\omega_k :=(a_n^k), \mbox{ where } a_n^k = \left\{
\begin{split}
\left[ \frac{\varepsilon b_n}{20} \right], & \mbox{ for every } n \mbox{ such that } l\leq n \leq k,\\
0, & \mbox{ if either } 1\leq n < l \mbox{ or } k<n.
\end{split}
\right.
\]
Since $(u_n, \omega_k )=1$ for every $n>k$, we obtain that $\omega_k \in H$ for every $k>l$.
For every  $n\in\omega$ we have
\[
2\pi \cdot \frac{1}{b_n} \left[ \frac{\varepsilon b_n}{20} \right] < \frac{2\pi\varepsilon}{20} < \varepsilon <\frac{1}{2}.
\]
This inequality and the inequalities  (\ref{01}) and (\ref{02}) imply
\[
\begin{split}
\rho( 0, \omega_k) & = d(0,\omega_k ) + \sup\left\{ \left| 1 - (u_n, \omega_k )\right|,\; n \in\omega \right\} \\
 & \leq \frac{1}{2^l} + \max\left\{ \left| 1 - \exp\left\{ 2\pi i \frac{1}{b_n} \left[ \frac{\varepsilon b_n}{20} \right] \right\} \right|,\; l\leq n \leq k \right\} \\
& \leq \frac{\varepsilon}{3} + 2\pi \cdot \max\left\{ \frac{1}{b_n} \left[ \frac{\varepsilon b_n}{20} \right] ,\; l\leq n \leq k \right\} < \frac{\varepsilon}{3} + \frac{2\pi\varepsilon}{20} < \varepsilon .
\end{split}
\]
Thus $\omega_k \in F_m$ for every natural  number $k>l$. Evidently,
\[
\omega_k \to \widetilde{\omega}=\left( \widetilde{a}_n \right)_{n\in\omega}  \mbox{ in } X, \mbox{ where } \widetilde{a}_n = \left\{
\begin{split}
0, & \mbox{ if } 0\leq n < l ,\\
\left[ \frac{\varepsilon b_n}{20} \right], & \mbox{ if }  l\leq n .
\end{split}
\right.
\]
Since $F_m$ is a compact subset of $X$, we have  $\widetilde{\omega} \in F_m $. Hence $\widetilde{\omega}\in H$. On the other hand,  since $b_n \to \infty$ we have
\[
\lim_{n\to\infty} \left\| \frac{\widetilde{a}_n}{b_n} \right\| = \lim_{n\to\infty} \frac{1}{b_n} \left[ \frac{\varepsilon b_n}{20} \right] =\frac{\varepsilon}{20} \not= 0.
\]
Thus $\widetilde{\omega} \not\in H$ by (\ref{5}). This contradiction shows that $H$ is not  an $F_\sigma$-subset of $X$.
\end{proof}

Now we are in position to prove Theorems \ref{t3} and \ref{tS}.

{\em Proof of Theorem {\rm \ref{t3}}}.
Let $X$ be a compact Abelian group of infinite exponent. Then $G:= \widehat{X}$ also has infinite exponent. It is well-known that $G$ contains a countably infinite subgroup $S$ of one of the following form:
\begin{enumerate}
\item[(a)] $S\cong \ZZ$;
\item[(b)] $S\cong  \mathbb{Z} (p^\infty)$;
\item[(c)] $S\cong \bigoplus_{n\in\omega} \ZZ (b_n)$, where  $1<b_0 < b_1 <\dots$.
\end{enumerate}
Fix such a subgroup $S$. Set $K=S^\perp$ and $Y= X/K \cong S_d^\wedge$,
where $S_d$ denotes the group $S$ endowed with the discrete topology. Since $S$ is countable, $Y$ is metrizable. Hence $\{ 0\}$ is a $G_\delta$-subgroup of $Y$. Thus $K$ is a $G_\delta$-subgroup of $X$. Let $q: X\to Y$ be the quotient map.
By Lemmas \ref{l1}-\ref{l3},   the compact group $Y$ has a dense $T$-characterized subgroups $\widetilde{H}$ which is not an $F_\sigma$-subset of $Y$. Lemma  \ref{l14} implies that $H:=q^{-1}(\widetilde{H})$ is a dense $T$-characterized subgroups of $X$.
 Since the continuous image of  an $F_\sigma$-subset of a compact group is  an $F_\sigma$-subset as well, we obtain that  $H$ is  not an $F_\sigma$-subset of $X$.
Thus the subgroup $H$ of $X$ is   $T$-characterized but it is not an $F_\sigma$-subset of $X$.
 The theorem is proved.
$\Box$

{\em Proof of Theorem {\rm \ref{tS}}}.
(1) follows from Fact \ref{fBor}.

(2): By Lemma 3.6 in  \cite{DG}, every infinite compact Abelian group $X$ contains a dense characterized subgroup $H$. By Fact \ref{f1}, $H$ is $T$-characterized. Since every $G_\delta$-subgroup of $X$ is closed in $X$ by Proposition 2.4 of \cite{DG}, $H$ is not a $G_\delta$-subgroup of $X$.

(3) follows from Theorem \ref{t2} and the aforementioned Proposition 2.4 of \cite{DG}.

(4) follows from Fact \ref{fBor}.

(5) follows from Corollary \ref{c4}.
$\Box$

It is trivial that $\mathrm{Char}_T(X) \subseteq \mathrm{Char}(X)$ for every compact Abelian group $X$. For the circle group $\TT$ we have.

\begin{proposition} \label{pCharT}
$\mathrm{Char}_T (\TT) = \mathrm{Char} (\TT)$.
\end{proposition}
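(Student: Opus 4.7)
The inclusion $\mathrm{Char}_T(\TT) \subseteq \mathrm{Char}(\TT)$ is immediate from the definitions, so the content is the reverse inclusion. My plan is to take an arbitrary $H \in \mathrm{Char}(\TT)$, characterized by some sequence $\uuu \subset \ZZ = \widehat{\TT}$, and exploit the fact that the proper closed subgroups of $\TT$ are exactly the finite cyclic subgroups. This forces a sharp dichotomy on $\bar{H}$: either $\bar{H} = \TT$ (so $H$ is dense) or $\bar{H}$ is finite, and in the latter case $H \subseteq \bar{H}$ is itself finite and therefore equal to its closure.

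First I would dispose of the dense case: if $H = s_\uuu(\TT)$ is dense in $\TT$, then Fact \ref{f1} tells us immediately that $\uuu$ is a $TB$-sequence in $\ZZ$, hence a $T$-sequence, and so the same $\uuu$ witnesses $H \in \mathrm{Char}_T(\TT)$. Next I would handle the trivial case $H = \TT$, which is $T$-characterized by the zero sequence. The remaining case is $H$ a proper closed subgroup of $\TT$, necessarily of the form $H = \ZZ(n)$ for some $n \geq 1$ (including $n = 1$, i.e.\ $H = \{0\}$).

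In this closed proper case I would invoke Theorem \ref{t1}. Since $\TT$ is metrizable, $H$ is a $G_\delta$-subgroup of $\TT$. Its annihilator is $H^\perp = n\ZZ \cong \ZZ$, which has infinite exponent. Condition (3)(a) of Theorem \ref{t1} is therefore satisfied, so $H$ is $T$-characterized in $\TT$. Combining the three cases gives $\mathrm{Char}(\TT) \subseteq \mathrm{Char}_T(\TT)$, and hence equality.

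There is no real obstacle here: the whole argument rests on the very restricted structure of closed subgroups of $\TT$, which collapses every non-dense characterized subgroup to a closed (in fact finite) one, and on the already-established Theorem \ref{t1} to cover those closed instances. The only point to be mildly careful about is verifying that the sequence $\uuu$ actually lives in $\widehat{\TT} = \ZZ$ so that Fact \ref{f1} applies in the dense case, and noting that the elementary classification of closed subgroups of $\TT$ is what makes this proposition special to $\TT$ (and not available for general compact Abelian groups).
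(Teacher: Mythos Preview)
Your proof is correct and follows essentially the same approach as the paper: the paper splits into the cases $H$ infinite (hence dense, handled by Fact~\ref{f1}) and $H$ finite (hence closed, handled by Theorem~\ref{t1} via the observation that $H^\perp$ has infinite exponent), which matches your dichotomy exactly. Your separate treatment of $H=\TT$ is harmless but redundant, since that case is already absorbed into the dense case.
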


\begin{proof}
We have to show only that $\mathrm{Char}(\TT) \subseteq \mathrm{Char}_T (\TT)$. Let $H=s_\uuu (\TT) \in \mathrm{Char} (\TT)$ for some sequence $\uuu$ in $\ZZ$.

If $H$ is infinite, then $H$ is dense in $\TT$. So $\uuu$ is a $T$-sequence in $\mathbb{Z}$ by Fact \ref{f1}. Thus $H\in \mathrm{Char}_T (\TT)$.

If $H$ is finite, then $H$ is closed in $\TT$. Clearly, $H^\perp$ has infinite exponent. Thus $H\in \mathrm{Char}_T (\TT)$ by Theorem \ref{t1}.
\end{proof}
Note that,  if  a  compact Abelian group  $X$ satisfies the equality $\mathrm{Char}_T (X) = \mathrm{Char} (X)$, then $X$ is connected by Fact \ref{f2} and Theorem \ref{t2}. This fact and Proposition \ref{pCharT} justify the next problem:
\begin{problem} \label{prob41}
Does there exists a connected compact Abelian group  $X$ such that $\mathrm{Char}_T (X) \not= \mathrm{Char} (X)$? Is it true that $\mathrm{Char}_T (X) = \mathrm{Char} (X)$ if and only if $X$ is connected?
\end{problem}

For a compact Abelian group $X$, the set of all subgroups of $X$ which are both $F_{\sigma\delta}$- and $G_{\delta\sigma}$-subsets of $X$ we denote by $\mathrm{S}\Delta_3^0(X)$. To complete the study of the Borel hierarchy of ($T$-)characterized subgroups of $X$ we have to answer to the next question.
\begin{problem}
Describe compact Abelian groups  $X$ of infinite exponent for which $\mathrm{Char}(X)\subseteq \mathrm{S}\Delta_3^0(X)$. For which compact Abelian groups $X$ of infinite exponent there exists a $T$-characterized subgroup $H$ that does not belong to $\mathrm{S}\Delta_3^0(X)$?
\end{problem}

\section{$\mathfrak{g}_T$-closed and $\mathfrak{g}_T$-dense subgroups of compact Abelian groups}

The following closure operator $\mathfrak{g}$ of the category of Abelian topological groups is defined in \cite{DMT}.
Let $X$ be an Abelian topological group and $H$ its arbitrary subgroup. The closure operator $\mathfrak{g}=\mathfrak{g}_X$ is defined as follows
\[
\mathfrak{g}_X (H) := \bigcap_{ \mathbf{u} \in \widehat{X}^\NN } \left\{ s_{\mathbf{u}} (X) : \; H \leq s_{\mathbf{u}} (X) \right\},
\]
and we say that $H$ is $\mathfrak{g}$-{\it closed} if $H=\mathfrak{g} (H)$, and  $H$ is $\mathfrak{g}$-{\it dense} if $\mathfrak{g} (H)=X$.

The set of all $T$-sequences in the dual group $\widehat{X}$ of a compact Abelian group $X$ we denote by $\mathcal{T}_s (\widehat{X})$. Clearly, $\mathcal{T}_s (\widehat{X})\subsetneqq \widehat{X}^\NN$. Let $H$ be a subgroup of $X$. In analogy to the closure operator $\mathfrak{g}$, $\mathfrak{g}$-closure and $\mathfrak{g}$-density, the operator $\mathfrak{g}_T$ is defined as follows
\[
\mathfrak{g}_T (H) := \bigcap_{ \mathbf{u} \in \mathcal{T}_s (\widehat{X}) } \left\{ s_{\mathbf{u}} (X) : \; H \leq s_{\mathbf{u}} (X) \right\},
\]
and we say that $H$ is {\it $\mathfrak{g}_T$-closed} if $H=\mathfrak{g}_T (H)$, and  $H$ is {\it  $\mathfrak{g}_T$-dense} if $\mathfrak{g}_T (H)=X$.

In this section we study some properties of $\mathfrak{g}_T$-closed and $\mathfrak{g}_T$-dense subgroups of a compact Abelian group $X$.
Note that every $\mathfrak{g}$-dense subgroup of $X$ is dense by Lemma 2.12 of \cite{DMT}, but for  $\mathfrak{g}_T$-dense subgroups the situation changes:
\begin{proposition} \label{p41}
Let $X$ be  a compact Abelian group.
\begin{enumerate}
\item[{\rm (1)}] If $H$ is a $\mathfrak{g}_T$-dense subgroup of $X$, then the closure ${\bar H}$ of $H$ is an open subgroup of $X$.
\item[{\rm (2)}] Every  open subgroup of a compact Abelian group $X$ is $\mathfrak{g}_T$-dense.
\end{enumerate}
\end{proposition}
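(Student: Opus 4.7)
The plan is to translate both parts to statements about the Bohr topology on a discrete dual group and to apply Glicksberg's theorem (in a discrete Abelian group, Bohr-compact sets are finite, so any Bohr-convergent sequence is eventually constant), together with Lemma~\ref{l11}.

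For part (2), assume $H$ is open in $X$; then $H^\perp$ is finite. Let $\uuu=\{u_n\}$ be a $T$-sequence in $\widehat X$ with $H\subseteq s_{\uuu}(X)$. The hypothesis says the restrictions $u_n|_H\in\widehat H$ converge to $0$ pointwise on $H$, i.e., in the Bohr topology of the discrete group $\widehat H$; by Glicksberg this forces $u_n|_H=0$ eventually, so $u_n\in H^\perp$ eventually. Thus $\uuu$ lies eventually in the finite set $H^\perp$, and since any finite Hausdorff subspace of a topological group is discrete, a $T$-sequence eventually contained in a finite set must be eventually zero. Hence $s_{\uuu}(X)=X$, proving that $H$ is $\mathfrak{g}_T$-dense.

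For part (1), I argue the contrapositive: assuming $\bar H$ is not open in $X$, I produce a $T$-sequence $\uuu$ in $\widehat X$ with $H\subseteq s_{\uuu}(X)\subsetneq X$. Since $\bar H$ is a closed, non-open subgroup, $Y:=X/\bar H$ is an infinite compact Abelian group, so $\widehat Y\cong\bar H^\perp$ is an infinite discrete Abelian group. By Protasov--Zelenyuk \cite{ZP1} (or by the constructions of Lemmas~\ref{l1}--\ref{l3} applied to the primary decomposition of $\widehat Y$), $\widehat Y$ carries a $T$-sequence $\widetilde{\uuu}=\{\widetilde u_n\}$ that is not eventually zero. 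Letting $q:X\to Y$ be the quotient, put $u_n:=\widetilde u_n\circ q\in\bar H^\perp$; by Lemma~\ref{l11} $\uuu$ is a $T$-sequence in $\widehat X$, and since $\uuu\subseteq\bar H^\perp$ we obtain $\bar H\subseteq s_{\uuu}(X)$, hence $H\subseteq s_{\uuu}(X)$. Glicksberg applied to the discrete $\widehat Y$ shows that $\widetilde{\uuu}$, not being eventually zero, is not Bohr-convergent to $0$, so some $\bar x\in Y$ satisfies $(\widetilde u_n,\bar x)\not\to 1$; any lift $x\in q^{-1}(\bar x)$ witnesses $(u_n,x)\not\to 1$, giving $s_{\uuu}(X)\neq X$.

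The main technical point is justifying the existence of a non-trivial $T$-sequence in an arbitrary infinite discrete Abelian group $G$. A structural case split handles this cleanly: if $G$ has infinite exponent it contains a copy of $\mathbb{Z}$, of some $\mathbb{Z}(p^\infty)$, or of a direct sum $\bigoplus_n\mathbb{Z}(b_n)$ with $b_n\to\infty$, and the explicit constructions of Lemmas~\ref{l1}--\ref{l3} deliver the desired $T$-sequences; if $G$ has finite exponent it contains a direct summand $\mathbb{Z}(p^k)^{(\omega)}$, where the standard basis vectors $e_n$ converge to $0$ in the restriction of the Tychonoff topology on $\mathbb{Z}(p^k)^\omega$, yielding a non-trivial $T$-sequence. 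This case-analysis, together with the two Glicksberg applications above, completes both parts.
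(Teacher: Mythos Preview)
Your proof is correct, but it follows a genuinely different route from the paper's.

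For part (2), the paper simply observes that if $H\subseteq s_{\uuu}(X)$ with $H$ open, then $s_{\uuu}(X)$ is open, and invokes Corollary~\ref{c2} (no proper open subgroup is $T$-characterized) to conclude $s_{\uuu}(X)=X$. That corollary in turn rests on the main Theorem~\ref{t1}. Your argument via Glicksberg's theorem sidesteps this dependency: it shows directly that $u_n|_H\to 0$ pointwise on the compact group $H$ forces $u_n|_H=0$ eventually, after which finiteness of $H^\perp$ and the $T$-sequence hypothesis finish the job. (In fact Glicksberg is more than you need here: orthogonality of characters plus dominated convergence already gives that a sequence of nontrivial characters on a compact group cannot converge pointwise to $1$.) Your version is thus more self-contained, at the cost of importing an external theorem.

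For part (1), the paper quotes Lemma~3.6 of \cite{DG} (every infinite compact Abelian group has a \emph{proper dense} characterized subgroup $S$), uses Fact~\ref{f1} to see that the characterizing sequence is a $TB$-sequence and hence a $T$-sequence, and then pulls back through $q:X\to X/\bar H$ via Lemma~\ref{l14}. The properness $S\neq Y$ is built in, so no further work is needed. You instead build a non-eventually-zero $T$-sequence in $\widehat Y$ by hand via the structure theory of infinite Abelian groups (the same trichotomy the paper uses in the proof of Theorem~\ref{t3}, plus the bounded-exponent case), and then invoke Glicksberg a second time to guarantee $s_{\widetilde{\uuu}}(Y)\neq Y$. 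This works, and the structural claim you need is exactly the ``well-known'' fact stated at the start of the proof of Theorem~\ref{t3} together with the elementary bounded case; but the paper's route is shorter because the existence of a proper dense characterized subgroup packages both the $T$-sequence and the properness in one citation.
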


\begin{proof}
(1) Suppose for a contradiction that ${\bar H}$ is not open in $X$. Then $X/{\bar H}$ is an infinite compact group. By Lemma 3.6 of \cite{DG}, $X/{\bar H}$ has a proper dense characterized subgroup $S$. Fact \ref{f1} implies that $S$ is a $T$-characterized subgroup of $X/{\bar H}$. Let $q:X\to X/{\bar H}$ be the quotient map. Then Lemma \ref{l14} yields that $q^{-1}(S)$ is a  $T$-characterized dense subgroup of $X$ containing $H$. Since $q^{-1}(S)\not= X$, we obtain that $H$ is not $\mathfrak{g}_T$-dense in $X$, a contradiction.

(2) Let $H$ be an open subgroup of $X$. If $H=X$ the assertion is trivial. Assume that $H$ is a proper subgroup (so $X$ is disconnected).  Let $\uuu$ be an arbitrary $T$-sequence such that $H\subseteq s_\uuu (X)$. Since $H$ is open, $s_\uuu (X)$ is open as well. Now  Corollary \ref{c2} implies that $s_\uuu (X)=X$. Thus $H$ is $\mathfrak{g}_T$-dense in $X$.
\end{proof}

Proposition \ref{p41}(2) shows that $\mathfrak{g}_T$-density may essentially differ from the usual $\mathfrak{g}$-density. In the next theorem we characterize all compact Abelian groups for which all $\mathfrak{g}_T$-dense subgroups are also dense.

\begin{theorem} \label{t41}
All $\mathfrak{g}_T$-dense subgroups of a compact Abelian group $X$ are dense if and only if $X$ is connected.
\end{theorem}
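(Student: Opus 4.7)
The plan is to deduce both directions directly from Proposition~\ref{p41} together with the standard fact that a compact Abelian group is connected if and only if it has no proper open subgroup.

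For the ``only if'' direction, I will argue by contrapositive. Suppose $X$ is disconnected. Then, since the connected component of the identity in a compact group is the intersection of all open subgroups, $X$ admits a proper open subgroup $H$. By Proposition~\ref{p41}(2), every open subgroup is $\mathfrak{g}_T$-dense, so $H$ is $\mathfrak{g}_T$-dense. On the other hand, $H$ is closed in $X$ and $H \neq X$, so $H$ is not dense. This produces the desired $\mathfrak{g}_T$-dense subgroup of $X$ that fails to be dense.

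For the ``if'' direction, suppose $X$ is connected and let $H$ be a $\mathfrak{g}_T$-dense subgroup of $X$. By Proposition~\ref{p41}(1), the closure $\bar H$ is an open subgroup of $X$. Since $X$ is connected and $\bar H$ is open (hence also closed and of finite index), connectedness forces $\bar H = X$, i.e., $H$ is dense in $X$.

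There is no real obstacle here beyond invoking the correct tools; the entire content of the theorem is already packaged in Proposition~\ref{p41}. The only point that deserves a brief remark in the write-up is the equivalence between disconnectedness of a compact Abelian group and the existence of a proper open subgroup, which one may either cite from a standard reference or derive from the fact that the dual of a disconnected compact Abelian group contains a nonzero torsion element $\chi$, whose annihilator $\langle\chi\rangle^\perp$ is then a proper open subgroup of $X$ (this is the same device already used in the proof of Theorem~\ref{t2}).
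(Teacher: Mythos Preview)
Your proof is correct and follows essentially the same approach as the paper: both directions rely on Proposition~\ref{p41} together with the standard equivalence between connectedness of a compact Abelian group and the nonexistence of proper open subgroups (the paper cites \cite[7.9]{HR1} for this). The only cosmetic difference is that you phrase the ``only if'' direction as a contrapositive, whereas the paper argues it directly.
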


\begin{proof}
Assume that all $\mathfrak{g}_T$-dense subgroup of $X$ are dense. Proposition \ref{p41}(2) implies that $X$ has no open proper subgroups. Thus $X$ is connected by \cite[7.9]{HR1}.

Conversely, let $X$ be connected and $H$ be a $\mathfrak{g}_T$-dense subgroup of $X$.  Proposition \ref{p41}(1) implies that the closure ${\bar H}$ of $H$ is an open subgroup of $X$. Since $X$ is connected we obtain that ${\bar H}= X$. Thus $H$ is dense in $X$.
\end{proof}

For  $\mathfrak{g}_T$-closed subgroups we have:
\begin{proposition} \label{p42}
Let $X$ be  a compact Abelian group.
\begin{enumerate}
\item[{\rm (1)}] Every proper  open subgroup $H$ of $X$ is a  $\mathfrak{g}$-closed  non-$\mathfrak{g}_T$-closed subgroup.
\item[{\rm (2)}] If every $\mathfrak{g}$-closed subgroup of $X$ is $\mathfrak{g}_T$-closed, then $X$ is connected.
\end{enumerate}
\end{proposition}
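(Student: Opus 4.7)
For part (1), the plan is first to verify $\mathfrak{g}$-closedness and then to establish the stronger statement $\mathfrak{g}_T(H) = X$. Since $H$ is open in $X$, it is automatically closed (its complement is a finite union of translates of $H$, each of them open). Moreover $H$ is a closed $G_\delta$-subgroup: the quotient $X/H$ is finite, hence metrizable, so $\{0\}$ is $G_\delta$ in $X/H$ and $H$ is $G_\delta$ in $X$. By Fact~\ref{f2}, $H$ is characterized by some sequence $\uuu$ in $\widehat{X}$. This single sequence already witnesses $\mathfrak{g}(H)\subseteq s_{\uuu}(X)=H$, so $\mathfrak{g}(H)=H$ and $H$ is $\mathfrak{g}$-closed.

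To see that $H$ is not $\mathfrak{g}_T$-closed, I would show $\mathfrak{g}_T(H) = X$. Take any $T$-sequence $\uuu\in\mathcal{T}_s(\widehat{X})$ with $H\subseteq s_{\uuu}(X)$. Then $s_{\uuu}(X)$ is a subgroup of $X$ containing the open subgroup $H$, so $s_{\uuu}(X)$ itself is a union of $H$-cosets, each open, and therefore open in $X$. If $s_{\uuu}(X)$ were a proper subgroup of $X$, then it would be a proper open subgroup of $X$ which is $T$-characterized (by $\uuu$), directly contradicting Corollary~\ref{c2}. Hence $s_{\uuu}(X) = X$ for every such $\uuu$, and the intersection defining $\mathfrak{g}_T(H)$ collapses to $X\neq H$.

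For part (2), I would argue the contrapositive. If $X$ is disconnected, then by \cite[7.9]{HR1} (invoked earlier in the paper) $X$ admits a proper open subgroup $H$. By part (1), $H$ is $\mathfrak{g}$-closed but not $\mathfrak{g}_T$-closed, contradicting the hypothesis. Therefore $X$ must be connected.

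The only non-routine step is the key observation in (1) that a $T$-sequence whose characterized subgroup contains the open subgroup $H$ must in fact characterize all of $X$; but once one notes that any subgroup containing an open subgroup is itself open, Corollary~\ref{c2} rules out any intermediate possibility and the argument is immediate. The rest is bookkeeping with Fact~\ref{f2} and the definitions of $\mathfrak{g}$ and $\mathfrak{g}_T$.
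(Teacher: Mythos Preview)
Your proof is correct and follows essentially the same route as the paper. The paper's proof of (1) cites Proposition~\ref{p41}(2) for $\mathfrak{g}_T$-density (whose proof is exactly the Corollary~\ref{c2} argument you wrote out inline) and cites Theorem~A of \cite{DG} for $\mathfrak{g}$-closedness (which amounts to your Fact~\ref{f2} argument); part (2) is identical.
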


\begin{proof}
(1)  The subgroup $H$ is $\mathfrak{g}_T$-dense in $X$ by Proposition \ref{p41}. Therefore $H$ is  not $\mathfrak{g}_T$-closed. On the other hand,
$H$ is $\mathfrak{g}$-closed in $X$ by Theorem A of \cite{DG}.

(2) Item (1)  implies that $X$ has no open subgroups. Thus $X$ is connected by \cite[7.9]{HR1}.
\end{proof}

We do not know whether the converse in Proposition \ref{p42}(2) holds true:
\begin{problem} \label{prob42}
Let a  compact Abelian group $X$ be connected. Is it true that  every $\mathfrak{g}$-closed subgroup of $X$ is also  $\mathfrak{g}_T$-closed?
\end{problem}

{\bf Historical Note.} This paper (with $a_n =n$ in Lemma 3.1) was sent  for possible publications to the journal ``Topology Proceedings'' at 25 November 2012. However, the author till now did not received even a report from the referee. Since the paper is cited in \cite{DG,Ga4} and other articles which have already been published, the author decided to put it in ArXiv.


\bibliographystyle{plain}

\end{document}